\newtheorem{theorem}{Theorem}[section]
\newtheorem{lemma}[theorem]{Lemma}
\newtheorem{corollary}[theorem]{Corollary}
\newtheorem{proposition}[theorem]{Proposition}
\newtheorem{remark}{Remark}[section]
\newtheorem{example}{Example}[section]
\newcommand\numberthis{\addtocounter{equation}{1}\tag{\theequation}}
\def\R{\mathbb{R}}
\def\N{\mathbb{N}}
\def\E{\mathbb{E}}
\def\P{\mathbb{P}}
\def\eps{\epsilon}
\def\del{\delta}
\def\la{\lambda}
\def\t{\theta}
\def\a{\alpha}
\def\d{\mathrm{d}}
\def\c{{\complement}}
\def\h{\hat}
\def\hi{h(X_i(\t))}
\def\M{\mathfrak{M}_n(\t)}
\def\m{\mathfrak{M}_n}
\def\g{\mathfrak{g}}
\def\hm{\hat{\mu}_{n}}
\def\hl{\hat{\lambda}}
\def\sgn{\mathrm{sgn}}
\def\hw{\hat{w}}
\def\I{\mathcal{I}}
\def\c{{\complement}}
\newcommand{\ip}{\mc{I}_+}
\newcommand{\ic}{\mc{I}_-}
\newcommand{\iz}{\mc{I}_0}
\newcommand{\mc}[1]{\mathcal{#1}}
\renewcommand{\numberthis}{\addtocounter{equation}{1}\tag{\theequation}}
\renewcommand{\l}[0]{\left }
\renewcommand{\r}[0]{\right}
\renewcommand*{\@cite@ofmt}{\hbox}
\begin{document}
\title{Maximum Likelihood under constraints: Degeneracies and Random Critical Points}
\author{
	\begin{tabular}{c}
		{Subhroshekhar Ghosh}
		 \footnote{
 Department of Mathematics,
National University of Singapore, Singapore 119076}\\subhrowork@gmail.com
	\end{tabular}
	\and
	\begin{tabular}{c}
		{Sanjay Chaudhuri}
		 \footnote {
 Dept. of Stat. \& Applied Prob.,
 National University of Singapore, Singapore 117546}\\
  {stasc@nus.edu.sg} \\
	\end{tabular}
}
\date{}
\maketitle

%
%
%
\begin{abstract}
We investigate the problem of semi-parametric  maximum likelihood under constraints on summary statistics. Such a procedure results in a discrete probability distribution that maximises the likelihood among all such distributions under the specified constraints (called estimating equations), and is an approximation to the underlying population distribution. The study of such \textit{empirical likelihood} originates from the seminal work of Owen (\citep{owen_1988}, \citep{owenbook}). We investigate this procedure in the setting of mis-specified (or biased) estimating equations, i.e. when the null hypothesis is not true.  We establish that the behaviour of the optimal distribution under such mis-specification differ markedly from their properties under the null, i.e. when the estimating equations are unbiased and correctly specified.
    This is manifested by certain ``degeneracies'' in the optimal distribution which define the likelihood.  Such degeneracies  are not observed  under the null.
    Furthermore, we establish an anomalous behaviour of the log-likelihood based Wilks'
    statistic, which, unlike under the null, \textit{does not} exhibit a chi-squared limit.
    In the Bayesian setting, we rigorously establish the posterior consistency of procedures based on these ideas, where instead of a parametric likelihood, an empirical likelihood is used to define the posterior distribution.
    In particular, we show that this posterior, as a random probability measure, rapidly converges to the delta measure at the true parameter value.
  A novel feature of our approach is the investigation of critical points of random functions in the context
    of such empirical likelihood. In particular, we obtain the location and the mass of the degenerate optimal weights as the leading and sub-leading terms in a canonical expansion of a particular critical point of a random function
that is naturally associated with the model.

  \end{abstract}

\section{Introduction}
In this paper, we investigate the problem of maximum likelihood under constraints on summary statistics. We will consider the likelihood over the space of discrete probability distributions supported on the given data points. We will consider such a distribution that maximises this likelihood, under the constraint that the expectation of certain summary statistics based on the candidate distribution should match specified values, related to what is believed to be  the corresponding expectation under the population distribution. These  equations are referred to as the estimating equations. 
The end result of this procedure is a candidate probability distribution that is supported on the observed data points on one hand, and well-approximates the population distribution on the other. For a detailed discussion of the procedure, we refer the reader to Section \ref{sec:notpd}. In summary, this is a likelihood-based method  to find the ``best approximation'' to the law of the population based on empirical data (incorporated through the data-dependent estimating equations),  and hence is referred to as the method of empirical likelihood.

Empirical likelihood is a popular paradigm in
semi-parametric statistics, applicable to a wide range of
scientific scenarios.  
It was introduced in the seminal work  of Owen (\citep{owen_1988}, also see \citep{owenbook}), who used it for testing statistical hypothesis and established the asymptotic properties of the corresponding log-likelihood based Wilks' statistic under the null.  Since then, several authors have deduced various properties of this likelihood and
the parameter estimates obtained by maximising it.  Over the years a body of literature has emerged involving this concept, touching upon several diverse areas as: hypothesis testing \citep{diciccio_hall_romano_1991}, parameter estimation \citep{qin_lawless_1994}, density estimation \citep{hall_owen_1993}, large deviations \citep{mykland_1999,grendar_judge_2009,kitamura_santos_shaikh_2012},
semiparametric inference \citep{murphy_1995,murphy_vandervaart_1997}, survival analysis \citep{li_1995}, high dimensional inference \citep{hjort_mckeague_keilegom_2009,peng_schick_2013,chang_tang_wu_2018}, regression \citep{chen2009review}, to provide a partial list.

Most of the known theoretical properties of empirical likelihood are deduced at or near the true value of the parameter, for which the expectations of the parametric constraints are zero.  Much less studied are its properties under mis-specified constraints, that is, when the true expectations of the estimating equations specifying the constraints, do not equal zero.

In this article we investigate the properties of empirical likelihood under mis-specified constraints.  We show that the properties of the optimal weights which define the likelihood are markedly different from their properties under correct specification. Such understanding of the ``landscape'' would have implications vis-a-vis   the behaviour of statistical procedures based on the empirical likelihood paradigm, some of which we already explore in this article. Further investigations in this vein have been carried out in the related methodological paper \citep{chaudhuri2018easy}.

Mis-specified constraints can affect the likelihood in two ways. First of all, the constrained optimisation problem may be infeasible, in which case it is  customary to define the empirical likelihood to be zero.  When the sample size is  sufficiently large,
the likelihood remains positive even under mis-specification. However, as we demonstrate below, the optimal weights which are used to define the maximum likelihood distribution exhibit certain degeneracies which are not observed under the null.  We first explore the location and the mass of such degeneracies.
Similar degeneracies have been known to researchers in statistical physics \citep{jaynes_1957a,jaynes_1957b} and exponential random graph models (abbrv. ERGM) \citep{snijders_pattison_robins_handcock_2006,robins_pattison_kalish_lusher_2007,chatterjee_diaconis_2013,horvat_czabarke_toroczkai_2015,fellows_handcock_2017}.  Our results apply to such situations and, in fact, provide a quantification of such phenomena.
Our investigations crucially hinge on a certain canonical expansion that we derive for the Lagrange multiplier connected to the constraints in the optimisation problem.  Such expansions under mis-specification appear to be unknown in the literature,
and can be a powerful tool, as we demonstrate in the Bayesian setting (see Section \ref{sec:postcons}).

Even though the properties of the log-likelihood based Wilks' statistic that corresponds to the empirical likelihood are well known under the null hypothesis, its properties under the alternative are less clear. For details on the Wilks' statistic, we refer the reader to Section \ref{sec:wilks}.  Assuming mild conditions, \citep{owenbook}  proves that, under the truth, the Wilk's statistic asymptotically follows a chi-squared distribution.
In view of this result, as in the case for classical parametric Wilks' statistics, it is perhaps natural to assume that under the alternative the Wilks' statistic would asymptotically be distributed as a non-central chi-square random variable.
\textcolor{black}{Under appropriate local alternatives, collapsing to the null at certain rate, this is indeed true \citep{qin_lawless_1994,lazar_mykland_1998,kitamura_santos_shaikh_2012}. 
In this paper we show that, interestingly, that is not always the case. In fact, under a fixed alternative (i.e., an alternative that does not collapse to the null as $n \to \infty$), the Wilks' statistic asymptotically would not converge to any distribution, and furthermore, does not exhibit any discernible scaling limit behaviour}.

  Properties of empirical likelihood under mis-specified constraints are extremely useful for its application in the Bayesian setting, where the usual parametric likelihood is replaced by the empirical likelihood evaluated at a given value of the parameter.
  Direct use of empirical likelihood in a Bayesian setting goes back to \citep{lazar_2003}.  Many authors have considered the use of Bayesian empirical likelihood (BayesEL) and similar procedures in recent times.
  Some properties of such procedures have been studied by \citep{schennach_2005}, \citep{fang_mukerjee_2005, fang_mukerjee_2006}, \citep{chaudhuri_mondal_yin_2017}, \citep{chib_shin_simoni_2018}, \citep{grendar_judge_2009}, \citep{zhong_ghosh_2016}, \citep{vexler_zou_hutson_2018}, \citep{yuan_clarke_2010}, among others.
  Domains of application include small area estimation \citep{chaudhuri_ghosh_2011,porter_holan_wikle_2015},  quantile regression \citep{yang_he_2012}, approximate Bayesian computation \citep{mengersen_pudlo_robert_2012} etc.
Posterior consistency of the BayesEL procedures have been discussed by \citep{grendar_judge_2009}, employing  sophisticated results from the theory of large deviations.  In this work we provide a short and succinct proof of fast rate of convergence of a BayesEL posterior distribution.

\textcolor{black}{
  We use the nature of critical points of random functions to investigate the properties of empirical likelihood under mis-specification.  
Investigation of random critical points have attracted attention in probability in the recent years (see, e.g., \citep{auffinger2013random}, \citep{auffinger2013complexity}, and the references therein).
  Our \textit{ab initio}  approach allows us to obtain a novel canonical expansion for the optimal value of the Lagrange multiplier for the underlying constrained optimization problem. This optimal Lagrange parameter is actually related to the critical point of a natural random function associated with the model.  In fact, we obtain an understanding the precise order of growth of the optimal Lagrange multiplier. 
  The successive terms in the canonical expansion yield the location and the mass of the degenerate optimal weights, which is one of our main results.  A related expansion of the log-likelihood based Wilks' statistic under mis-specification, that allows us to deduce its asymptotic properties, follows from these considerations.
}


\section{Set up and Model}
\subsection{Notations and Problem Description}
\label{sec:notpd}
Suppose $X_1, \ldots, X_n$ are i.i.d. random variables taking values in some space $E$.  The common distribution of the $X_i$-s is $F_\gamma$, which depends on a parameter $\gamma\in\Theta\subseteq\R^q$. 
In the spirit of non-parametric statistics, no knowledge of an analytic form of the distribution $F_\gamma$ is assumed.  Let $h : E \times \Theta \mapsto \R$ be a function such that, for any given $\vartheta \in\Theta$, we have
\begin{equation}\label{eq:esteq}
\E_{F_{\vartheta}}\left[h(X_1,\gamma)\right]=0 \text{ if } \gamma=\vartheta.
\end{equation}

A simple example of the above set-up is where the $X_i$-s themselves take values in $\R^q$ (in other words, $E=\R^q$), the distribution $F_\gamma$ has mean $\gamma$, and the function $h(X_1,\gamma)=X_1-\gamma$.

Let $\t_0$ be given and fixed. Let $\t_0$ be the \textit{ground truth}, that is, $(X_1,\ldots,X_n)$ are generated from the probability distribution $F_{\t_0}$. For $\t \in \Theta$, we consider the null hypothesis is that the parameter value is equal to $\t$. When this $\t$ (as in the null hypothesis) happens to equal the ground truth $\t_0$, we say that we are analysing the testing problem \textit{under the null} (or, \textit{under the truth}). In this case, the problem is said to be \textit{correctly specified}. When $\t$ (as in the null hypothesis) is not the same as the ground truth $\t_0$ (and $\E_{F_{\theta_0}}\left[h(X_1,\t)\right] \ne 0 $), the problem is said to be \textit{mis-specified}, and the estimating equations are said to be \textit{biased}.   

Denoting $w=\left(w_i\right)^n_{i=1}$, we compute:
\begin{equation}\label{eq:emplik}
\hat{w}=\mbox{argmax}_{w \in \mathcal{W}}\prod^n_{i=1}w_i,
\end{equation}
where
\begin{equation} \label{eq:constraint}
\mathcal{W}=\left\{w~:~\sum^n_{i=1}w_i h(X_i,\t)=0\right\}\cap\Delta_{n-1}
\end{equation}
and $\Delta_{n-1}$ is the standard simplex in $\R^n$, given by
\[
\Delta_{n-1}=\left\{w~:~w_i\ge 0,~i=1,2,\ldots,n,~\sum^n_{i=1}w_i=1\right\}.
\]

This optimization problem is feasible if
\begin{equation}\label{eq:feas}
\min_{i=1,2,\ldots,n}h\left(X_i,\t\right)<0<\max_{i=1,2,\ldots,n}h\left(X_i,\t\right).
\end{equation}
If the optimization problem is not feasible we define $\hat{w}=0$.

Once $\hat{w}$ has been computed, we can obtain an approximation of the underlying distribution of the $X_i$-s by considering the atomic measure which puts mass $\h{w}_i$ at $X_i$, $i=1$, $2$, $\ldots$, $n$.  This leads to an approximation of the underlying 
probability distribution via
\begin{equation}\label{eq:mhat}
  \d \hat{F}_n(x)=\sum^n_{i=1}\hat{w}_i \del_{X_i}(x),
\end{equation}
where $\del_{a}$ is the Dirac delta mass at the point $a$.

It may be noted that $\hat{F}_n$ can be considered to be a constrained empirical estimate of the underlying distribution $F_\t$.  In the absence of the constraint imposed by \eqref{eq:constraint}, i.e. when $\mathcal{W}=\Delta_{n-1}$, it turns out that $\hat{w}_i=n^{-1}$, for
all $i=1$, $2$, $\ldots$, $n$ and the corresponding $\hat{F}_n$ is the well-known empirical measure.


When $\theta=\theta_0$ (i.e., when the null hypothesis is true), the constraint \eqref{eq:constraint} with $w=\hat{w}$ (which can be abbreviated as $\E_{\hat{F}_n}[h(X,\t)]=0$) is an empirical version of the relation \eqref{eq:emplik} with $\vartheta=\t_0$, which captures the ``ground truth'' that holds true in the population. As such, this situation is also referred to as the correctly specified case.  In such a situation, the properties of the optimal weight vector $\hat{w}$ and $\hat{F}_n$ are well known.  
In particular, \citep{owenbook} shows that under the truth, asymptotically, the corresponding log-likelihood based Wilks' statistic, given by $-2\sum^n_{i=1}\log(n\hat{w}_i)$, follows a chi-squared distribution with one degree of freedom.

The focus of our investigations in the present paper is the case where the null hypothesis entails that the parameter value is $\t\in\Theta$, which happens to be  such that $\t \ne \t_0$ (where $\t_0$ is the true parameter value) and
\begin{equation}\label{eq:miseq}
\E_{F_{\t_0}}\left[h(X_1,\t)\right]\ne 0.
\end{equation}
Under this situation, the constraint \eqref{eq:constraint}  does not have a counterpart that holds true for the population. The constraint, therefore, is a mis-representation of the ground truth in this setting. As such, this scenario is referred to as the ``mis-specified case''.

Although, for large $n$, even under mis-specification the optimization problem \eqref{eq:constraint} would be feasible, that is the condition \eqref{eq:feas} would be satisfied, many of the ideas and  methods that are valid under the null would no longer be applicable.

Throughout the rest of this paper, a sequence of events $\{E_n\}_{n}$ is said to occur \textit{with high probability} (abbrv. \textit{w.h.p.}) if $\P(E_n) \to 1$ as $n \to \infty$.

\subsection{An Illustrative Example}
\begin{figure}[h!]

\centering
        \resizebox{2.5in}{2.5in}{\includegraphics{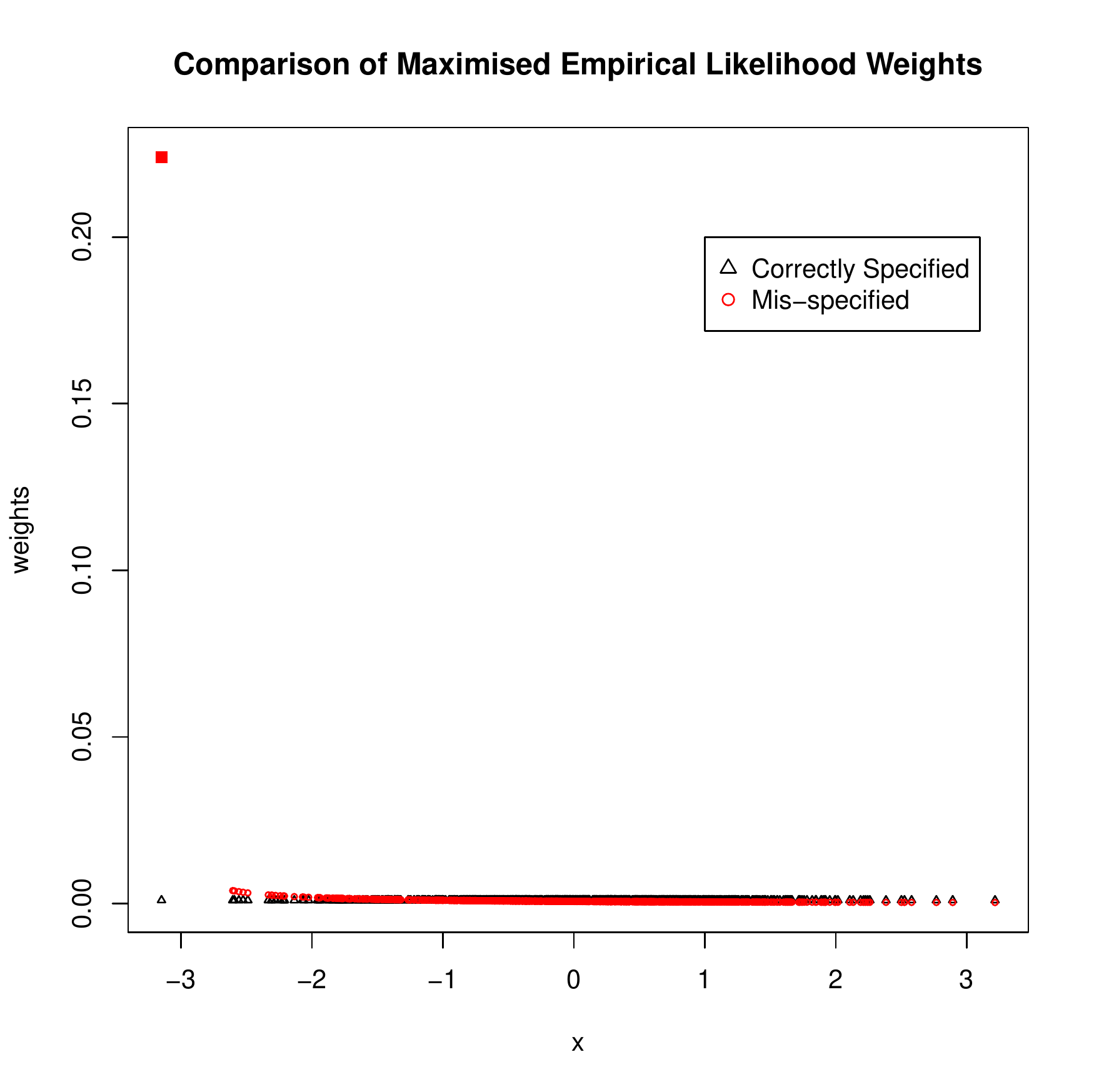}}
    \caption{Gaussian observations with correctly specified mean (black) vs. mis-specified mean (red). Note the degenerate mass at the top left, under mis-specification.}
    \label{fig:Normal}
\end{figure}

\begin{example}\label{ex:gaussian}
In order to illustrate the statistical behaviour of the maximised weights under mis-specified estimating equation, we consider estimating the empirical distribution function from $n=1000$, i.i.d. $N\left(0,1\right)$ observations. We are interested in understanding the mean of the underlying distribution. We are dealing with a location family where the observable $h$ is given by $h(X,\t)=X-\t$.

When the null hypothesis is that the mean is 0, the estimating equation is correctly specified. The resulting maximised weights $\hat{w}$ has been plotted in black in the Figure \ref{fig:Normal} above.  It is clearly, seen that each $\hat{w}_i$ is close to $.001$ (ie. 1/n) and there are no large weights.

If, however, the null hypothesis is that the mean is -1, then the estimating equation is mis-specified. This is a scenario where the null hypothesis is not true. The nature of the optimal weights $\hat{w}$ changes.  In Figure \ref{fig:Normal}, the weight corresponding to the most negative observation (marked by the red square) is significantly, roughly $249$ times, larger than its weight in the correctly specified case.
  This is the largest weight we get when the constraint is mis-specified.  All other optimal weights are small. The largest weight is about $59$ times larger than the second largest weight.
  Note that, even under mis-specification \eqref{eq:feas} holds, that is, the problem in \eqref{eq:emplik} is still feasible under the mis-specified constraints.  Furthermore, this discrepancy in the optimal weights cannot be explained by finiteness of the sample size.
Even if the sample size is increased, such large optimal weight corresponding to one of the extreme observations is obtained.


\end{example}

From the Example above, it clearly follows that mis-specified constraints as in \eqref{eq:miseq} leads to differences in the statistical behaviour of optimal weights, which leads to results of a very different flavour.
It is this difference in statistical behaviour under the alternative that we will investigate below.

\subsection{Preliminary Considerations}

We maximise the product \eqref{eq:emplik}, constrained by the first order moment constraint \eqref{eq:constraint}. 

For the null hypothesis given by $\t\in\Theta$, we abbreviate
\[
h_i:=h(X_i,\t)
\]
and consider the objective,
\begin{equation} \label{eq:lagrangian}
L(w,\la,\a)=\sum_{i=1}^n \log nw_i - \a(\sum_{i=1}^n w_i-1) - n \la \sum_{i=1}^n w_i h_i,
\end{equation}
where $\a$ and $\la$ are Lagrange multipliers respectively ensuring that the constraints that $\sum^n_{i=1} w_i=1$ and $\sum_{i=1}^n w_i h_i=0$ are  satisfied. 

Let $(\hw,\hat{\la},\hat{\a})$ is the argmax for this optimization problem. Setting $\frac{\partial L}{\partial w_i} \big|_{\hw,\hl,\hat{\a}}=0$, we deduce that $\frac{1}{\hw_i} =\hat{\a}+n \hl h_i$ for every $i$. Multiplying both sides by $\hw_i$, summing over $i$ and using the constraints $\sum_{i=1}^n \hw_i h_i=0$ and $\sum_{i=1}^n \hw_i =1$ gives $\hat{\a}=n$. Substituting $\hat{\a}=n$ back into $\frac{1}{\hw_i} =\hat{\a}+n \hl h_i$ and re-arranging, we obtain 
\begin{equation} \label{eq:MLE}
\hw_i=\frac{1}{n} \frac{1}{1+\hl h_i}.
\end{equation}
Substituting \eqref{eq:MLE} into the constraint $\sum_{i=1}^n \hw_i h_i =0$, it quickly follows that $\hl$ satisfies the equation
\begin{equation} \label{eq:hleq}
\sum_{i=1}^n \frac{h_i}{1+\hl h_i}=0.
\end{equation}

For more detailed considerations, we refer the reader to  \citep{owenbook} .

A key implicit quantity which significantly impacts the statistical behaviour in this model is the Lagrange multiplier $\la$ (or rather, its optimal value $\hl$), and indeed our first result will be about the asymptotic behaviour of $\hl$.
The information about the asymptotic properties of $\hl$ will then be exploited to obtain an understanding of the optimal weights and the associated Wilks' statistic.

A key aspect of our approach is the introduction of the perspective of critical points of random functions in the study of empirical likelihood. Let us consider the function \[\mathcal{L}(\la) =-\sum_{i=1}^ n \log (1+ \la h_i), \] for any $\la$ such that $1+ \la h_i>0$ for all $i$.
It follows from \eqref{eq:MLE} that, up to a non-random additive factor, $\mathcal{L}(\hl)$ is the maximum value of the log of the objective in \eqref{eq:emplik}. Since the objective in \eqref{eq:emplik} is the likelihood of the weight vector $w$, we deduce that $\mathcal{L}(\hl)$ equals, in fact, the maximum log-likelihood for this model.

Furthermore, \eqref{eq:hleq} implies that $\hl$ is a solution to the equation $\mathcal{L}'(\la)=0$, which means that $\hl$ is a critical point of the function $\mathcal{L}$. In fact, the critical value of the function $\mathcal{L}$ at $\la=\hl$ is the maximum value of the log-likelihood for our model, as noted above.

The constraint $1+\hl h_i > 0$ for each $i$, a consequence of \eqref{eq:MLE} and the fact that $0\le \hw_i \le 1$ for each $i$.



\section{Main Results}\label{sec:main}
\textcolor{black}{
We denote $a(\t)=\E_{F_{\t_0}}[h(X_i,\t)]$, and for each $i=1$, $2$, $\ldots$, $n$, we define the random errors $\xi_i(\t)$ by
\[
h_i=h(X_i,\t)=a(\t)+\xi_i(\t).
\]
From the basic set-up of our model, we clearly have $a(\t)=0$ if $\t=\t_0$. On the other hand, we are interested in mis-specified case, where $a(\t)\ne 0$. 
    Our results are applicable to a very wide class of distributions of the random errors $(\xi_i(\t))^n_{i=1}$ satisfying a few mild distributional assumptions (valid for uniformly for all $\t \in \Theta$) that we list below.
}

\begin{itemize}
\item[(A1)] (The errors) \textcolor{black}{The random variables $\xi_1(\t)$, $\xi_2(\t)$, $\ldots$, $\xi_n(\t)$ are i.i.d. with zero mean and finite second moment.  We will denote the common distribution to be that of the random variable $\xi(\t)$.}
\item[(A2)] (The rate of growth of maxima) There is a non-random sequence $\m \to \infty$ such that $\m=o(n)$ and:
 \begin{itemize}
\item[(1)] $\frac{1}{\m}\max_{1\le i \le n} |\xi_i(\t)| 1_{\xi_i(\t)>0} = 1 + o_P(1)$, as $n \to \infty$ .
\item[(2)]  $\frac{1}{\m}\max_{1\le i \le n} |\xi_i(\t)| 1_{\xi_i(\t)<0} = 1 + o_P(1)$, as $n \to \infty$ .
 \end{itemize}
\item[(A3)] (Separation of first and second maxima) Let us denote: 
\[\xi^+_{\max}(\t):=\max_{1\le i \le n} \xi_i(\t) 1_{\xi_i(\t)>0} \hspace{10pt} \text{and} \hspace{10pt} \xi^-_{\max}(\t):=\max_{1\le i \le n} \xi_i(\t) 1_{\xi_i(\t)<0},\]
and \[\xi^+_{\max,2}(\t):=\max_{1\le i \le n} \{\{\xi_i(\t) 1_{\xi_i(\t)>0}\} \mathbin{\backslash} \{ \xi^+_{\max}(\t) \} \}, \]  \[\xi^-_{\max,2}(\t):=\max_{1\le i \le n} \{\{\xi_i(\t) 1_{\xi_i(\t) <0}\} \mathbin{\backslash} \{ \xi^-_{\max}(\t) \} \}.\]
We assume that, \[|\xi^+_{\max}(\t) - \xi^+_{\max,2}(\t)| \ge \m^{-\gamma} \hspace{10pt} \text{and} \hspace{10pt} |\xi^-_{\max}(\t) - \xi^-_{\max,2}(\t)| \ge \m^{-\gamma}\] with probability $\to 1$ as $n \to \infty$, where $\gamma$ and $\m$ satisfy the condition that $\m^{\gamma+2}=o_P(n)$.
\item[(A4)] (Decay of tails) Let $p_{n,\del}:=\P[|\xi(\t)|> \m^{1-\del}]$, for $\del>0$. Then, for small enough $\del>0$,\textcolor{black}{we assume} $\m^{\gamma+2} \log n \cdot p_{n,\del} \to 0$ as $n \to \infty$.
\end{itemize}

\textcolor{black}{Rates of growths of the extreme order statistics and spacings have been studied by several authors.  We specifically refer to \citep{deheuvels_1986, nagaraja_1982, welsch_1973, reiss2012approximate} for results on general distributions.  Many distributions would satisfy the Assumptions (A1)-(A4).  Of course, in the most crucial class of models the random offsets $\xi_i$ are assumed to be Gaussian random variables. 
Our assumptions are satisfied by the standard Gaussian distribution for $\m=\sqrt{2 \log n}$, any $\gamma>1/2$ and any $\del<1/2$.} It is known that, under very general conditions, the spacing between the largest and the second largest order statistics is essentially of the same order as the fluctuations of the maximum (see \citep{nagaraja_1982} and \citep{nagaraja2015spacings}, in particular equation (33) in the latter). These fluctuations are related to the \textsl{norming constants} of a distribution that are well known in extreme value theory (see \citep{resnick2013extreme}, in particular Chapter 1 therein). It can be seen from these considerations that our conditions would be satisfied by a wide class of error distributions $\xi_i$. A typical non-Gaussian example would be the Laplace distribution (i.e. the difference of two i.i.d. exponential random variables), which arises in important settings (see, e.g., \citep{kotz2012laplace} and the references therein). More generally, using the tools of \citep{nagaraja2015spacings} and \citep{resnick2013extreme}, we can obtain $\m,\gamma$ and $\delta$
(as in the assumptions (A1)-(A4)) for specific distributions.

\subsection{Asymptotic behaviour of $\hl$ under mis-specification}\label{sec:hl}
We start by deriving an asymptotic expansion of the estimate of the Lagrange multiplier $\lambda$, when the constraint has been mis-specified.
To that end,  we introduce the notation
  \[
  \mathrm{sgn}(x)=\begin{cases}
  -1&if~x<0,\\
  1&if~x>0,\\
  0&if~x=0.
    \end{cases}
  \]
  We first show that:

\begin{theorem} \label{thm : lagrange}
Under (A1)-(A4), for any $\t \in \Theta$ such that $a(\t) \ne 0$, we have
\[
\m \hl = \sgn(a(\t)) + o_P(1).
\]
\end{theorem}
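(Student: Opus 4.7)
The plan is as follows. Without loss of generality assume $a(\t) > 0$; the case $a(\t) < 0$ reduces to this via the substitution $h_i \mapsto -h_i$, which flips the signs of $a(\t)$ and $\hl$ while preserving hypotheses (A1)--(A4). I recast \eqref{eq:hleq} as $F(\la) = 0$, where $F(\la) := \sum_{i=1}^n h_i/(1+\la h_i)$. Since $F'(\la) = -\sum_i h_i^2/(1+\la h_i)^2 < 0$, the function $F$ is strictly decreasing on its feasibility interval $(-1/h_{\max}, 1/|h_{\min}|)$, and the law of large numbers gives $F(0) = \sum_i h_i = na(\t)(1+o_P(1)) > 0$ w.h.p. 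Hence the unique root $\hl$ lies in $(0, 1/|h_{\min}|)$, and it remains to sandwich $\m \hl$.

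The upper bound is immediate from feasibility: $\hl < 1/|h_{\min}|$. Since $h_{\min} = a(\t) + \min_i \xi_i(\t)$ with $a(\t)$ fixed and $\m \to \infty$, (A2) yields $|h_{\min}|/\m = 1 + o_P(1)$ and therefore $\m \hl < 1 + o_P(1)$.

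For the lower bound, fix $c \in (0,1)$ and aim to show $F(c/\m) > 0$ w.h.p., whence monotonicity of $F$ forces $\m \hl > c$. Partition the indices via the threshold $\m^{1-\del}$ (with $\del$ as in (A4)) into a bulk $B := \{|h_i| \le \m^{1-\del}\}$ and its complement $B^\c$. On $B$, $|\la h_i| \le c\m^{-\del} \to 0$ justifies the expansion
\[
\frac{h_i}{1+\la h_i} = h_i - \la h_i^2 + \la^2 \frac{h_i^3}{1+\la h_i},
\]
so the bulk contribution is $\sum_{i\in B} h_i + O_P(n/\m) + O_P(n/\m^{1+\del})$, using $\sum_i h_i^2 = O_P(n)$ from the second-moment hypothesis in (A1). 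On $B^\c$, the uniform bound $1 + \la h_i \ge 1 - c|h_{\min}|/\m = (1-c)(1+o_P(1))$ implies each summand is bounded in absolute value by $\m(1+o_P(1))/(1-c)$. A standard variance bound gives $|B^\c| = O_P(np_{n,\del} + \sqrt{np_{n,\del}})$, and (A4) then makes the total extreme contribution $o_P(n)$. Combining with $\sum_{i\in B} h_i = \sum_i h_i - \sum_{i\in B^\c} h_i = na(\t)(1+o_P(1))$, we conclude $F(c/\m) = na(\t)(1+o_P(1)) > 0$ w.h.p. Letting $c \uparrow 1$ and combining with the upper bound yields $\m\hl = 1 + o_P(1) = \sgn(a(\t)) + o_P(1)$.

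The main obstacle is the extreme sum: the dangerous $h_i$'s live at scale $\m$, exactly the scale of $1/\hl$, so the individual denominators $1 + \hl h_i$ can approach zero at the very rate that determines the answer. The argument goes through because (A2) pins $|h_{\min}|/\m$ to $1$, keeping the denominators bounded away from zero as long as $c < 1$, and (A4) keeps the number of extreme terms small enough that, despite each being of size $\m$, their net contribution is lower order than the bulk mean $na(\t)$. (Assumption (A3) appears not to be needed for this leading-order statement, but will presumably be essential for the sub-leading terms in the canonical expansion alluded to in the introduction.)
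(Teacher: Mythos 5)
Your proposal is correct and follows essentially the same route as the paper: feasibility plus monotonicity of $g$ give the upper bound $\hl<1/|h_{\max}|$, and the lower bound comes from showing $g$ is still positive at a point of the form $(1-\eps)/\m$ by splitting the sum into a bulk controlled by the second moment and an extreme set whose size is controlled by (A4), with the denominators on the negative extremes kept away from zero exactly as in the paper's Lemma \ref{lem:stick}. Your observation that (A3) is not needed for this leading-order statement also matches the paper, which invokes (A3) only for the sub-leading term in Theorem \ref{thm : expansion}.
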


\textcolor{black}{We contrast Theorem \ref{thm : lagrange} with the situation under correct specification.  Assuming finite third moment, when $a(\t)=0$, \citep{owenbook} shows that $\hl=O_p(n^{-1/2})$. On the other hand, under mis-specification, Theorem \ref{thm : lagrange} demonstrates that $\hl$ is typically of the order of $1/\sqrt{\log n}$ when the offsets are Gaussian.}

Using the input from  Theorem \ref{thm : lagrange}, we obtain the following asymptotic expansion of $\hl$:
\begin{theorem} \label{thm : expansion}
Under (A1)-(A4), for any $\t \in \Theta$ such that $a(\t)\ne 0$,  we have
\begin{equation} \label{eq : expansion}
\hl=\frac{\sgn(a(\t))}{\m}-\frac{a(\t)^{-1}}{n}+o_P\left(\frac{1}{n}\right).
\end{equation}
\end{theorem}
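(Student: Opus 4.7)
The strategy is to sharpen Theorem~\ref{thm : lagrange} by expressing $\hl$ exactly in terms of the single ``degenerate'' weight $\hw_{i^*}$ and then pinning this weight down via the two global constraints. Assume without loss of generality that $a := a(\t) > 0$ (the $a < 0$ case is symmetric under $\xi \mapsto -\xi$). Let $i^*$ be the index with $\xi_{i^*}(\t) = -\xi^-_{\max}(\t)$, which is unique with probability $\to 1$ by (A3); since Theorem~\ref{thm : lagrange} gives $\hl > 0$ with high probability, the minimum of $\{1+\hl h_i\}_{i=1}^n$ is attained at $i^*$, and $\hw_{i^*}$ is the corresponding degenerate weight. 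Inverting the first-order condition $\hw_{i^*} = 1/[n(1+\hl h_{i^*})]$ yields the exact identity
\[
  \hl \;=\; \frac{1 - n\hw_{i^*}}{n\,\hw_{i^*}\,h_{i^*}},
\]
reducing the task to estimating $\hw_{i^*}$ to precision $o_P(1/n)$.

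To this end, I isolate the $i=i^*$ term in the constraint $\sum_i \hw_i h_i = 0$: writing $\bar h := (1-\hw_{i^*})^{-1}\sum_{i\ne i^*}\hw_i h_i$, a direct rearrangement gives $\hw_{i^*}\,\xi^-_{\max}(\t) = a + (1-\hw_{i^*})(a - \bar h)$. The heart of the proof is then to show that $\bar h - a = o_P(1/n)$. For $i \ne i^*$, assumption (A3) combined with Theorem~\ref{thm : lagrange} yields a uniform lower bound $1+\hl h_i \geq c\,\m^{-\gamma-1}$ with high probability, so the non-degenerate weights $\hw_i$ are close to $1/n$ up to a controlled perturbation. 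The bulk indices $\{i : |\xi_i(\t)| \le \m^{1-\del}\}$, comprising a fraction $1-o(1)$ of all indices by (A4), contribute $a$ to $\bar h$ to leading order by an LLN for the i.i.d.\ mean-zero errors; the near-extreme indices are handled via the uniform bound $\hw_i \le O(\m^{\gamma+1}/n)$ together with the summability $\m^{\gamma+2}\log n \cdot p_{n,\del} \to 0$ from (A4). Exploiting mean-zero cancellations beyond the naive $O_P(1/\sqrt n)$ CLT rate produces the required $o_P(1/n)$ bound.

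Granted $\bar h - a = o_P(1/n)$, the preceding display gives $\hw_{i^*}\,\xi^-_{\max}(\t) = a + o_P(1/n)$, whence $\hw_{i^*} = a/\m + o_P(1/n)$ after invoking (A2). Substituting $n\hw_{i^*} = na/\m + o_P(1)$ and $h_{i^*} = -\m(1+o_P(1))$ into the identity for $\hl$ and expanding to first order in $1/n$ recovers $\hl = 1/\m - 1/(na) + o_P(1/n)$. The principal obstacle is the sharp concentration $\bar h - a = o_P(1/n)$: beating the classical CLT rate $O_P(1/\sqrt n)$ requires simultaneously leveraging the mean-zero property (A1), the gap separation (A3), and the tail decay (A4), and this is the step where the delicate quantitative interplay among the assumptions becomes essential.
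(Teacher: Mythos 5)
Your overall architecture is essentially a reparametrization of the paper's own argument: the paper writes $\hl=-1/h_{\max}+\zeta$ and determines the correction $\zeta$ from the critical-point equation $\sum_i h_i/(1+\hl h_i)=0$ with the $i_{\max}$ term isolated, which is algebraically equivalent to your determination of the degenerate weight $\hw_{i^*}=1/(n\zeta h_{\max})$ from the constraint $\sum_i \hw_i h_i=0$; your bulk/near-extreme decomposition of the remaining indices, handled by the LLN and by (A4) respectively, mirrors the paper's Propositions \ref{prop:izc}--\ref{prop:ic}.

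The genuine gap is the step you yourself call the heart of the proof: the claim that $\bar h - a = o_P(1/n)$. This cannot be proved, because it is false. The quantity $\bar h$ is a normalized weighted average of the $n-1$ variables $h_i=a+\xi_i$ whose bulk weights are $(1+o_P(1))/n$, so $\bar h - a$ fluctuates at the CLT scale $n^{-1/2}$ (indeed $\sqrt{n}(\bar h -a)$ is asymptotically a nondegenerate Gaussian under (A1)); no ``cancellation beyond the naive CLT rate'' is available for an average of i.i.d.\ mean-zero errors, and nothing in (A1)--(A4) supplies one. Fortunately the estimate is also far stronger than what you need. Solving your two exact identities gives
\[
\hl \;=\; \frac{1}{|h_{i^*}|}\Bigl(1-\frac{1}{n}\Bigr)\;-\;\frac{1}{n\,\bar h},
\]
so $\bar h$ enters $\hl$ only through $1/(n\bar h)$, and $\bar h = a+o_P(1)$ already yields $1/(n\bar h)=a^{-1}/n+o_P(1/n)$, which is all that \eqref{eq : expansion} demands. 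This reciprocal structure --- an $o_P(1)$ estimate of an average upgrading automatically to an $o_P(1/n)$ estimate of the correction to $\hl$ --- is exactly what the paper exploits via $1/\zeta=-\sum_{i\ne i_{\max}}h_i/(1+\hl h_i)=-n(a+o_P(1))$. If you replace your target by $\bar h-a=o_P(1)$, your outlined decomposition does establish it and the proof closes. Two minor further points: your displayed identity should read $\hw_{i^*}\xi^-_{\max}(\t)=a+(1-\hw_{i^*})(\bar h-a)$ (the sign of the correction is flipped); and the final substitution $h_{i^*}=-\m(1+o_P(1))$ identifies the leading term only up to $o_P(1/\m)$, not $o_P(1/n)$ --- the honest form of the expansion is $\hl=1/|h_{\max}|-a^{-1}/n+o_P(1/n)$, a looseness the paper's own last step shares.
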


\textcolor{black}{Asymptotic properties of the optimal weights and that of the corresponding Wilks' statistic depend crucially on the asymptotic properties of $\hl$. Next we discuss these properties under mis-specification.}

\subsection{Degeneracy in the Optimal Weights}\label{sec:wd}
We now establish the existence of degeneracies in the \textcolor{black}{optimal weights} under mis-specification, and obtain a description of their location and their \textcolor{black}{magnitude} in terms of the leading and sub-leading terms \textcolor{black}{in the asymptotic expansion of $\hl$ in} \eqref{eq : expansion}.

{
\begin{theorem} \label{thm : degeneracy}
  Suppose, for any $\t \in \Theta$ such that $a(\t) \ne 0$ we define,

\begin{equation}\label{eq:hmax}
    h_{\max}=\begin{cases}
    \min_{1\le i\le n} h_i,&\text{if $a(\t)>0$,}\\
    \max_{1\le i\le n} h_i,&\text{if $a(\t)<0$.}
    \end{cases}
  \end{equation}
  Let $\hw_{\max}$ be the optimal weight on $h_{\max}$ in the solution of mis-specified problem in \eqref{eq:emplik}, with the index $i_{\max}$ being such that $h_{i_{\max}}=h_{\max}$.  Then under (A1) - (A4) we have
\begin{equation} \label{eq : deg1}
\hw_{\max}=\frac{|a(\t)|}{\m},
\end{equation}
and furthermore,
\begin{equation} \label{eq : deg2}
\max\{\hw_i~:~i\ne i_{\max}\}=O_P\left(\frac{\m^{\gamma+1}}{n}\right).
\end{equation}
We also have
\begin{equation} \label{eq : deg3}
\min\{\hat{w}_i~:~1\le i \le n\}\ge \frac{1}{n}(1-o_P(1)).
\end{equation}
\end{theorem}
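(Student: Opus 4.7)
The plan is to use the explicit representation $\hat{w}_i = [n(1 + \hat{\lambda} h_i)]^{-1}$ from \eqref{eq:MLE}, paired with the canonical expansion of $\hat{\lambda}$ in Theorem \ref{thm : expansion}. Without loss of generality I assume $a(\theta) > 0$; the other case is symmetric. Under this convention $h_{\max} = \min_i h_i = a(\theta) - \xi^-_{\max}(\theta)$, with $\hat{\lambda} > 0$ for large $n$ by Theorem \ref{thm : lagrange}.

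For \eqref{eq : deg1}: Since $\hat{\lambda} \sim 1/\mathfrak{M}_n$ and $h_{\max} \sim -\mathfrak{M}_n$, the leading contribution to $\hat{\lambda} h_{\max}$ equals $-1$, and the residual in $1 + \hat{\lambda} h_{\max}$ is what governs $\hat{w}_{\max}$. I would extract this residual's order directly from the constraint \eqref{eq:hleq}: separate off the $i_{\max}$ term and Taylor expand the remainder (valid because for $i \ne i_{\max}$, $|\hat{\lambda} h_i|$ stays bounded away from $1$), then apply the LLN $\sum_{i \ne i_{\max}} h_i \approx n a(\theta)$ to obtain $h_{\max}/(1 + \hat{\lambda} h_{\max}) \approx -n a(\theta)$ to leading order. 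Combined with $h_{\max} \sim -\mathfrak{M}_n$ this yields $1 + \hat{\lambda} h_{\max} \sim \mathfrak{M}_n/(n|a(\theta)|)$, and hence $\hat{w}_{\max} \sim |a(\theta)|/\mathfrak{M}_n$.

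For \eqref{eq : deg2} and \eqref{eq : deg3}: Since $\hat{\lambda} > 0$, $\hat{w}_i$ is strictly decreasing in $h_i$, so the maximum over $i \ne i_{\max}$ is attained at the index $i_{\max, 2}$ corresponding to $\xi^-_{\max, 2}$. Decomposing $1 + \hat{\lambda} h_{i_{\max, 2}} = (1 + \hat{\lambda} h_{\max}) + \hat{\lambda}(h_{i_{\max, 2}} - h_{\max})$ and applying A3's gap bound $h_{i_{\max, 2}} - h_{\max} \geq \mathfrak{M}_n^{-\gamma}$ together with $\hat{\lambda} \sim 1/\mathfrak{M}_n$ gives $1 + \hat{\lambda} h_{i_{\max, 2}} \geq \mathfrak{M}_n^{-(\gamma + 1)}(1 + o_P(1))$; the A3 growth assumption $\mathfrak{M}_n^{\gamma + 2} = o(n)$ ensures this dominates the $O(\mathfrak{M}_n/(n|a|))$ residual inherited from the analysis of \eqref{eq : deg1}. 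Hence $\hat{w}_{i_{\max, 2}} = O_P(\mathfrak{M}_n^{\gamma + 1}/n)$. For \eqref{eq : deg3}, $\min_i \hat{w}_i = [n \max_i (1 + \hat{\lambda} h_i)]^{-1}$ and the max is attained at $h_i = a(\theta) + \xi^+_{\max}(\theta)$; since $\hat{\lambda} a \to 0$ and $\hat{\lambda} \xi^+_{\max}$ is $O_P(1)$ by Theorem \ref{thm : expansion} and A2, this max is bounded in probability, giving the required lower bound on $\min_i \hat{w}_i$.

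The principal technical obstacle lies in \eqref{eq : deg1}: the delicate cancellation in $1 + \hat{\lambda} h_{\max}$ requires careful tracking of $o_P(\cdot)$ residuals, and the cleanest route seems to bypass direct substitution of Theorem \ref{thm : expansion}'s expansion into $\hat{\lambda} h_{\max}$ and instead work with the constraint equation \eqref{eq:hleq} to balance the $i_{\max}$ contribution against the LLN-controlled sum over the remaining indices. The tail bound A4 plays a supporting role in controlling the error terms in this LLN-based approximation, ensuring that the sum of non-degenerate contributions is well-approximated by $n a(\theta)$ with the precision needed for the argument.
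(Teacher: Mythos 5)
Your overall architecture matches the paper's: both arguments rest on the explicit formula $\hw_i=[n(1+\hl h_i)]^{-1}$ and on resolving the near-cancellation in $1+\hl h_{\max}$ to the precision $\m/(n|a|)$. Your instinct that the \emph{statement} of Theorem \ref{thm : expansion} (an expansion relative to $\m$ rather than to $|h_{\max}|$) is too coarse to be substituted directly into $\hl h_{\max}$, and that one must instead balance the $i_{\max}$ term of \eqref{eq:hleq} against the remaining sum, is exactly right --- the paper does the same thing, except that it packages the balancing as the quantity $\zeta=\hl+1/h_{\max}$ extracted from \emph{inside} the proof of Theorem \ref{thm : expansion}, where $1+\hl h_{\max}=\zeta h_{\max}$ holds exactly and $\zeta=-a^{-1}/n+o_P(1/n)$. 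Your monotonicity observation for \eqref{eq : deg2} (the maximum over $i\ne i_{\max}$ sits at the observation realizing $\xi^-_{\max,2}$) is a clean equivalent of the paper's two-case analysis and uses (A3) in the same way. Your treatment of \eqref{eq : deg3} coincides with the paper's; note that both versions, as written, really only yield $\min_i\hw_i\ge\tfrac{1}{2n}(1-o_P(1))$, since $1+\hl h_i$ approaches $2$ at the positive extreme --- this imprecision is present in the paper's own proof.

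The one genuine gap is your justification of the step $\sum_{i\ne i_{\max}}h_i/(1+\hl h_i)=na(1+o_P(1))$, on which \eqref{eq : deg1} entirely depends. You assert the Taylor expansion of these summands is ``valid because for $i\ne i_{\max}$, $|\hl h_i|$ stays bounded away from $1$.'' That is false: for negative $h_i$ close to $h_{\max}$ (in particular the index realizing $\xi^-_{\max,2}$), assumption (A3) only guarantees $1+\hl h_i\gtrsim\m^{-(\gamma+1)}$, so $|\hl h_i|\to 1$ and the corresponding summand can be of magnitude $\m^{\gamma+2}$; it cannot be Taylor-expanded. Such terms must be isolated and shown to contribute $o_P(n)$ in aggregate, which is precisely what the paper's three-way decomposition into $\iz^{\complement}$, $\ip\cap\iz$ and $\ic\cap\iz$ (Propositions \ref{prop:izc}--\ref{prop:ic}) accomplishes, combining the separation bound from (A3) with the counting estimate $\E[|\iz|]\le np_{n,\del}$ and the tail condition (A4). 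You correctly sense that (A4) ``plays a supporting role,'' but the argument needs this split made explicit; without it the claimed $na(1+o_P(1))$, and hence \eqref{eq : deg1}, is not established.
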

}

\begin{remark} \label{rem:hmax}
For $n$ large enough, with high probability, there are $h_i$-s with positive as well as negative sign (since $a$ is fixed and $\xi_i$-s assume large values both in the positive and the negative directions, by assumption (A2)). Since there are going to be $h_i$-s of both signs, $h_{\max}$ as above will be well defined, and will have the opposite sign of $a$. In particular, for $a(\t)>0$, we have $\sgn(a(\t))>0$ and hence $h_{\max}<0$, and in this case we have we have $h_{\max}=-|h_{\max}|$.
\end{remark}

\textcolor{black}{Theorem \ref{thm : degeneracy} completely characterises the optimal weights and the corresponding empirical estimate of the distribution of $X_1$, $X_2$, $\ldots$, $X_n$ obtained by maximising \eqref{eq:emplik} under the mis-specified constraint in \eqref{eq:constraint}.} \textcolor{black}{Condition (A3) entails, in particular, that $\m^{\gamma + 2}=o(n)$, which implies that under mis-specification $\max\{\hw_i~:~i\ne i_{\max}\}=o_P(\hw_{\max})$, that is $\hw_{max}$ is significantly larger than the rest of the weights. This clearly stands in contrast with the correctly specified case ($\t=\t_0$), because in that case it is known (\citep{owenbook}, \citep[Lemma $1$.]{yuan_xu_zheng_2014}) that under mild assumptions on the offsets $\xi_i$, $\hw_i=O_p(1/n)$, for all $i=1$, $2$, $\ldots$, $n$.} \eqref{eq : deg3} completes the picture, by demonstrating that, under mis-specification, the minimum optimal weight is not too far below the average optimal weight, that is, $1/n$.

\textcolor{black}{As an illustration we reconsider Example \ref{ex:gaussian}.  Recall that, under mis-specification, $a(\t)=1$ and as predicted in Theorem \ref{thm : degeneracy} the largest weight $\hw_{\max}$ corresponds to minimum observation.
}

\textcolor{black}{The empirical estimate of the population distribution behaves very differently in the mis-specified case:
the estimated measure $\hm$ has an atom at the observation which corresponds to $h_{\max}$ with an unusually large weight compared to rest of the distribution.  For Gaussian observations (see Example \ref{ex:gaussian}), its ratio with the maximum of the rest of the optimal weights blows up linearly with $n$ (up to logarithmic factors).}


\subsection{Wilks' Statistic under mis-specification}\label{sec:wilks}
\textcolor{black}{One defines the log-likelihood based Wilks' statistic as:
  \[
  \mathbf{L}= - 2 \sum_{i=1}^n \log n \h{w}_i .
  \]
This statistic plays an important role in hypothesis testing in this context, with the test rejecting the null hypothesis if $\mathbf{L}$ is large.
}

\textcolor{black}{When the null hypothesis is true (i.e. $\t = \t_0$), it is well-known (\citep{owenbook}) that 
  \[
  -2\sum^n_{i=1}\log(n\hat{w}_i)=\frac{\left\{\sum^n_{i=1}h(X_i)\right\}^2}{\sum^n_{i=1}\left\{h(X_i)\right\}^2}+o_p(1).
  \]
That is, $\mathbf{L}$ has a $\chi^2(1)$ distribution in the limit as $n \to \infty$.}

\textcolor{black}{In order to calculate the asymptotic power of the test, the behaviour of $\mathbf{L}$ under mis-specification, i.e. when $\t\ne\t_0$ has to be considered.
  In view of the asymptotic property under the null, it is natural to envisage that for $\t\ne\t_0$, the Wilks' statistic would converge to a non-central $\chi^2(1)$ distribution with the non-centrality parameter depending on $a(\t)$.
Under the local alternatives of the form $\t_1=\t_0+u/\sqrt{n}$, this is indeed true.  In particular, for such alternatives, \citep{qin_lawless_1994} prove the local asymptotic normality of the empirical likelihood ratio statistic holds.
A more detailed evaluation of the power of the empirical likelihood tests under this class of local alternatives can be found in \citep{lazar_mykland_1998}.  Using results from the theory of large deviations,
\citep{kitamura_santos_shaikh_2012} discuss asymptotic optimality of empirical likelihood for testing moment condition for a large class of alternatives.}

\textcolor{black}{However, due to the degeneracies in the optimal weights, for a fixed alternative not collapsing to $\t_0$, the asymptotic behaviour of $\mathbf{L}$ is quite anomalous.
  In fact, we show that its asymptotic distribution is far from a non-central chi-square. We prove this result in the Theorem below.}

\begin{theorem} \label{thm : Wilks}
Under (A1)-(A4), for $\t \in \Theta$ such that $a(\t)\ne 0$ we have
\[ \mathbf{L}=\frac{2n}{\m}|a(\t)|(1+o_P(1)).  \]
\end{theorem}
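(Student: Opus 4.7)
The starting point is the identity $\mathbf{L} = 2\sum_{i=1}^n \log(1+\hat{\lambda}h_i)$, which is immediate from $n\hat{w}_i = 1/(1+\hat{\lambda}h_i)$ in \eqref{eq:MLE}. The plan is to isolate the degenerate index $i_{\max}$, Taylor-expand the logarithm at typical indices, and feed in the canonical expansion $\hat{\lambda} = \sgn(a(\theta))/\m - 1/(na(\theta)) + o_P(1/n)$ of Theorem \ref{thm : expansion}.

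The index $i_{\max}$ is easy: Theorem \ref{thm : degeneracy} gives $n\hat{w}_{\max} = n|a(\theta)|/\m$, so its log-contribution is $O(\log n) = o(n/\m)$, since assumption (A3) forces $\m \log n = o(n)$. For the rest, I would split $\{i \ne i_{\max}\}$ into a \emph{bulk} set $A = \{i \ne i_{\max} : |\hat{\lambda}h_i|\le 1/2\}$ and an \emph{almost-extreme} set $B$ consisting of indices with $|\hat{\lambda}h_i| > 1/2$ (equivalently $|\xi_i(\t)| \gtrsim \m/3$ by Theorem \ref{thm : lagrange}). On $B$, the identity $1+\hat{\lambda}h_i = 1/(n\hat{w}_i)$ together with the bound \eqref{eq : deg2} confines each log-term to $O(\log\m)$; combined with the cardinality estimate $|B| = O_P(np_{n,\delta})$ coming from (A4), the total $B$-contribution is $O_P(np_{n,\delta}\log\m)=o(n/\m)$.

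On $A$ I would use $\log(1+x) = x - x^2/2 + O(x^3)$ for $|x|\le 1/2$. The quadratic and cubic remainders aggregate to $O\l(\hat{\lambda}^2 \sum_i h_i^2\r) = O_P(n/\m^2) = o(n/\m)$, since $\sum h_i^2 = O_P(n)$ by a first-moment bound and (A1). The dominant linear contribution rearranges as $\hat{\lambda}\sum_{i=1}^n h_i$ minus the two already-controlled pieces at $i_{\max}$ and on $B$. Feeding in $\sum_{i=1}^n h_i = na(\theta) + O_P(\sqrt{n})$ and the expansion of $\hat{\lambda}$ from Theorem \ref{thm : expansion} produces
\[
\hat{\lambda} \sum_{i=1}^n h_i = \frac{n|a(\theta)|}{\m} - 1 + o_P(1) + O_P\l(\sqrt{n}/\m\r),
\]
which is $(n|a(\theta)|/\m)(1+o_P(1))$. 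Combining all pieces and multiplying by $2$ gives the claim.

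The principal obstacle is the almost-extreme set $B$: Taylor expansion is inadmissible there, and the only available weight bound \eqref{eq : deg2} yields a borderline logarithmic ceiling for each log-term. Matching this against the tail-decay assumption (A4) in a way that keeps the aggregate $B$-contribution strictly below the target scale $n/\m$ is the delicate point of the argument; it is precisely for this purpose that (A3) and (A4) are calibrated with the $\m^{\gamma+2}$ factor.
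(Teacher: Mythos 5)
Your proposal is correct and follows essentially the same route as the paper's proof: Taylor-expand $2\log(1+\hat\lambda h_i)$ on the bulk indices where $|\hat\lambda h_i|$ is small, extract the dominant linear term $2\hat\lambda\sum_i h_i \approx 2n|a(\t)|/\m$ via the expansion of $\hat\lambda$, bound the quadratic remainder by $O_P(n/\m^2)$, and kill the tail set by combining a per-term logarithmic bound on $|\log n\hat w_i|$ with the cardinality estimate $O_P(np_{n,\del})$ and assumption (A4). The only cosmetic differences are that the paper splits at the threshold $|h_i|>\m^{1-\del}$ (so $i_{\max}$ is absorbed into the tail set and handled with the cruder uniform bound $|\log n\hat w_i|\le\log n$ from \eqref{eq : deg3}), whereas you separate $i_{\max}$ explicitly and use the sharper $O(\log\m)$ ceiling from \eqref{eq : deg2}; both versions close under (A4).
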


For a large class of $\xi$ such that all its moments exist (for instance, the Gaussian distribution), we can extend the asymptotic expansion in Theorem \ref{thm : Wilks} to any degree. This is encapsulated by
\begin{corollary} \label{cor:Wilks}
Under the conditions of Theorem \ref{thm : Wilks} and the additional assumption that all moments $\mu_j=\E[h(X_1)^j]<\infty$, for any fixed $k \in \N$ we have
\[ \mathbf{L}= 2n \cdot \l[\sum_{j=1}^k (-1)^{j-1} \sgn(a(\t))^j \frac{\mu_j}{j \m^j}\r] \cdot (1+o_P(1)) \]
\end{corollary}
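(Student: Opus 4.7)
The starting point is the identity
\begin{equation*}
\mathbf{L} = -2 \sum_{i=1}^n \log(n \hw_i) = 2 \sum_{i=1}^n \log(1 + \hl h_i),
\end{equation*}
obtained by substituting \eqref{eq:MLE}. My plan is to Taylor-expand each logarithm to order $k$, combine with the canonical expansion of $\hl$ from Theorem \ref{thm : expansion}, and apply the law of large numbers $\sum_i h_i^j = n \mu_j (1 + o_P(1))$ (valid for every $j$ since all moments of $h$ are finite). Since the $j = 1$ term of the target sum is $2n|a(\t)|/\m$, matching Theorem \ref{thm : Wilks}, the required accuracy is an additive error of size $o_P(n/\m)$, which will become the $(1 + o_P(1))$ relative factor in the claimed form.

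The obstacle to a uniform Taylor argument is that $\hl h_i$ is not uniformly small across $i$: at $i = i_{\max}$ it approaches $-1$ by Theorem \ref{thm : degeneracy}, and at the opposite extreme order statistic it can approach $+1$. I would therefore partition $\{1, \ldots, n\}$ into three pieces: the singleton $\{i_{\max}\}$; the bulk $A = \{i : |h_i| \le \m^{1-\del}\}$ for a small fixed $\del > 0$ for which (A4) holds; and the intermediate extreme set $B = A^c \setminus \{i_{\max}\}$. The singleton contributes $2 \log(1 + \hl h_{\max}) = -2\log(n|a(\t)|/\m) = O(\log n) = o_P(n/\m)$ by \eqref{eq : deg1} and $\m = o(n)$. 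For $B$, Markov's inequality combined with (A4) gives $|B| = o_P(n/(\m^{\gamma+2}\log n))$; the two-sided bound $c/\m^{\gamma+1} \le 1 + \hl h_i \le O_P(1)$ valid for $i \ne i_{\max}$ (lower bound from \eqref{eq : deg2}, upper from $|\hl| = O(1/\m)$ and $|h_i| \le O(\m)$) gives $|\log(1 + \hl h_i)| = O(\log \m)$, so the total contribution from $B$ is $O_P(|B| \log \m) = o_P(n/\m)$.

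On the bulk $A$, Theorem \ref{thm : lagrange} gives $|\hl h_i| \le C \m^{-\del} \le 1/2$ eventually, so the standard Taylor remainder bound $|R_k(x)| \le 2|x|^{k+1}/(k+1)$ on $|x|\le 1/2$ applies uniformly, yielding
\begin{equation*}
\sum_{i \in A} \log(1 + \hl h_i) = \sum_{j=1}^k \frac{(-1)^{j-1}}{j} \hl^j \sum_{i \in A} h_i^j + O_P\!\l( \hl^{k+1} \sum_i |h_i|^{k+1} \r),
\end{equation*}
in which the remainder is $O_P(n/\m^{k+1}) = o_P(n/\m)$. Replacing $\sum_{i \in A} h_i^j$ by $\sum_{i=1}^n h_i^j = n\mu_j(1 + o_P(1))$ introduces a correction $\sum_{i \in B \cup \{i_{\max}\}} h_i^j = O_P((|B|+1)\m^j)$, which after multiplication by $\hl^j = O_P(\m^{-j})$ becomes $O_P(|B|+1) = o_P(n/\m)$. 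Finally, substituting $\hl^j = \sgn(a(\t))^j/\m^j + O_P(1/(n\m^{j-1}))$ from Theorem \ref{thm : expansion} and collecting terms, we arrive at
\begin{equation*}
\mathbf{L} = 2n \sum_{j=1}^k \frac{(-1)^{j-1}\,\sgn(a(\t))^j\, \mu_j}{j\, \m^j} + o_P\!\l( \frac{n}{\m} \r),
\end{equation*}
and dividing through by the leading $j=1$ term $2n|a(\t)|/\m$ (using $\mu_1 = a(\t)$) converts the additive $o_P(n/\m)$ into the $(1 + o_P(1))$ relative factor stated in the corollary.

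The main technical obstacle is the intermediate extreme set $B$: Assumption (A3), acting through \eqref{eq : deg2}, is needed to keep $1 + \hl h_i$ bounded away from zero for $i \ne i_{\max}$, while Assumption (A4) is needed to keep $|B|$ small enough that a per-term $O(\log \m)$ bound aggregates to $o_P(n/\m)$; balancing these two estimates against the moment growth $|h_i|^j = O(\m^j)$ on $B$ is the delicate bookkeeping step. Once it is settled, the bulk estimate is a routine application of the Taylor remainder and the law of large numbers.
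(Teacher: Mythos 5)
Your proof is correct and takes essentially the same route as the paper, whose own proof of the corollary simply says to carry the logarithmic expansion \eqref{eq:logexp} from the proof of Theorem \ref{thm : Wilks} to degree $k$: your bulk/extreme decomposition, the control of the extreme indices via (A4), the Taylor remainder bound on the bulk, the law of large numbers for $\frac{1}{n}\sum_i h_i^j$, and the substitution of the expansion of $\hl$ are exactly the degree-$k$ version of that argument. The only cosmetic difference is that you split off $\{i_{\max}\}$ and bound the remaining extreme indices with a per-term $O(\log \m)$ estimate via \eqref{eq : deg2}, whereas the paper handles all of $\I_0$ at once with the per-term $\log n$ bound coming from \eqref{eq : deg3}.
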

\textcolor{black}{Thus, under a fixed alternative, the Wilks' statistic $\mathbf{L}$, in fact, does not appear to exhibit any discernible distributional scaling limit as $n \rightarrow \infty$.  It diverges at the rate of $n/\M$ and the power of the test commensurately grows to one. }

\section{The Bayesian perspective}
\label{sec:postcons}

We exhibit the power of our approach by employing it to obtain a succinct proof of the posterior consistency of the so called Bayesian empirical likelihood procedures,  abbreviated as BayesEL procedures.

The BayesEL procedure entails starting with a prior $\pi$ on $\t\in \Theta$ (a compact subset of $\R^d$), and estimating equations defining the empirical likelihood.  For any given value of $\t\in\Theta$, the problem in \eqref{eq:emplik} is numerically solved to obtain the optimal weights.
Once these weights $\hw$ are determined, the posterior density is given by:
\begin{equation}\label{eq:post}
 \Pi^{(n)}(\t):=\Pi\left(\t\mid X_1,\ldots,X_n\right)=\frac{e^{\sum^n_{i=1}\log(\hw_i)}\pi(\t)}{\int_{\Theta}e^{\sum^n_{i=1}\log(\hw_i)}\pi(\t)d\t}.
\end{equation}
If for certain $\t$, the problem in \eqref{eq:emplik} is infeasible, the posterior is defined to be zero.

This posterior cannot be expressed in a closed form. Thus for the purpose of statistical inference, samples are drawn from it using specialised Markov Chain Monte Carlo procedures \citep{chaudhuri_mondal_yin_2017}.



We show that the random measure $\Pi^{(n)}(\t)$ converges to the delta measure $\del_{\t_0}$ as $n \to \infty$, where $\t_0$ is the true value of the parameter.
While there has been previous work on the posterior consistency of Bayesian empirical likelihood (see, e.g., \citep{grendar_judge_2009}), most of the known approaches rely heavily on sophisticated results and methods from large deviations theory.
Our approach, although simple and self-contained, provides fast rates of convergence. In particular, for Gaussian errors, we show an exponential decay up to logarithmic factors.

This is the content of Theorem \ref{thm : consistent} below. We prove our results in a slightly more general setting than above, which 
would require a slightly different set of assumptions ((B1)-(B6)) than we have been operating with so far, and these will be detailed in Section \ref{sec : consistent}.

\begin{theorem} \label{thm : consistent}
Under (B1)-(B7), as $n \to \infty$, the random measure $\Pi^{(n)}(\t)$ converges in probability to $\del_{\t_0}$ on the space of probability measures on $\R^d$.
\end{theorem}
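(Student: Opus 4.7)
The plan is to leverage the sharp contrast between the behaviour of the empirical log-likelihood at the truth and away from it, as encoded in Theorem \ref{thm : Wilks}. Write the posterior density as
\[
\Pi^{(n)}(\t) = \frac{\exp\bigl(-n\log n - \tfrac{1}{2}\mathbf{L}(\t)\bigr)\pi(\t)}{\int_\Theta \exp\bigl(-n\log n - \tfrac{1}{2}\mathbf{L}(\t')\bigr)\pi(\t')\,d\t'},
\]
so that the $n^{-n}$ factor cancels and the posterior is driven entirely by the Wilks' statistic $\mathbf{L}(\t) = -2\sum_i \log(n\hw_i(\t))$. The strategy is then to fix an arbitrary neighborhood $U_\eps$ of $\t_0$ and show that the posterior mass on $\Theta\setminus U_\eps$ tends to zero in probability, exploiting that $\mathbf{L}(\t)$ is of order $n/\m$ when $a(\t)\ne 0$, while $\mathbf{L}(\t_0)$ is $O_P(1)$ (indeed $\chi^2(1)$ in the limit).

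First I would upper-bound the numerator integral over $\Theta\setminus U_\eps$. Compactness of $\Theta$ and continuity of $a(\cdot)$ (which will be among the assumptions (B1)-(B7)) give $\inf_{\t\in\Theta\setminus U_\eps}|a(\t)|\ge c_\eps>0$. The required input is a \emph{uniform} version of Theorem \ref{thm : Wilks}: with high probability, $\mathbf{L}(\t)\ge \frac{c_\eps n}{\m}(1+o(1))$ for every $\t$ in the complement of $U_\eps$. Under standard regularity (equicontinuity of $h(\cdot,\t)$ in $\t$, moment bounds uniform in $\t$, and a Glivenko-Cantelli-type control of empirical averages of $h$ and $h^2$ over $\Theta$), the expansion of $\hl$ in Theorem \ref{thm : expansion} and hence of $\mathbf{L}$ in Theorem \ref{thm : Wilks} passes to a uniform-in-$\t$ statement. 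This then gives
\[
\int_{\Theta\setminus U_\eps} e^{-\tfrac12\mathbf{L}(\t)}\pi(\t)\,d\t \;\le\; \exp\!\Bigl(-\tfrac{c_\eps n}{2\m}(1+o_P(1))\Bigr).
\]

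Next I would lower-bound the denominator by integrating over a small ball $B_\del(\t_0)\subset U_\eps$ on which the prior puts positive mass. At $\t_0$ itself, the classical result gives $\mathbf{L}(\t_0)=O_P(1)$. Using continuity of $\mathbf{L}(\cdot)$ in $\t$ (inherited from smoothness of $h$ in $\t$, together with stability of $\hl$ as a critical point of $\mathcal{L}$) and the local behaviour of the log-likelihood ratio as in \citep{qin_lawless_1994}, one can show that on $B_\del(\t_0)$ with $\del$ small enough, $\mathbf{L}(\t)\le C$ with probability tending to one, uniformly in $\t\in B_\del(\t_0)$. This gives the lower bound
\[
\int_\Theta e^{-\tfrac12\mathbf{L}(\t)}\pi(\t)\,d\t \;\ge\; e^{-C/2}\,\pi(B_\del(\t_0)).
\]
Taking the ratio, the posterior mass of $\Theta\setminus U_\eps$ is at most $\pi(B_\del(\t_0))^{-1}\exp(-\tfrac{c_\eps n}{2\m}(1+o_P(1))+C/2)\to 0$ in probability, yielding the claimed convergence to $\del_{\t_0}$ in the weak topology (since $\eps$ was arbitrary).

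The main obstacle, as intimated above, is the passage from the pointwise Theorem \ref{thm : Wilks} to a uniform statement over $\t\in\Theta\setminus U_\eps$ and a corresponding uniform upper bound for $\mathbf{L}$ over $B_\del(\t_0)$; this is where hypotheses (B1)-(B7) must do genuine work, controlling the offsets $\xi_i(\t)$ as a stochastic process indexed by $\t$, ensuring feasibility of the constrained optimisation uniformly in $\t$ with high probability, and delivering uniform versions of (A1)-(A4). A secondary technical point is handling the infeasible region (where we set the likelihood to zero), but since that region only shrinks the numerator and the denominator is already bounded below by the contribution near $\t_0$, this poses no essential difficulty.
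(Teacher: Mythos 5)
Your overall architecture (exponentially small numerator outside a neighbourhood of $\t_0$, sub-exponential lower bound on the denominator near $\t_0$, then take the ratio) matches the paper's, but two of your key steps have genuine gaps. First, your upper bound rests on a uniform-in-$\t$ version of Theorem \ref{thm : Wilks} over $\Theta\setminus U_\eps$, which you flag as the main obstacle but do not supply. This is not a routine uniformization: Theorem \ref{thm : Wilks} depends on the extreme-value assumptions (A2)--(A4) (growth and spacings of the order statistics of the $\xi_i(\t)$), and the hypotheses (B1)--(B7) deliberately do \emph{not} include uniform versions of these; they provide only uniform laws of large numbers (B4), a growth rate for $\max_i|h(X_i(\t))|$ (B6), and the bound $|\hl(\t)|=O_P(1/\M)$ (B7). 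The paper sidesteps the issue entirely by exploiting the variational representation $L_n(\t)=\min_\la\{-\sum_i\log(1+\la h(X_i(\t)))\}$: plugging in the explicit suboptimal candidate $\la_1(\t)=\mathrm{sgn}(\g(\t))/(100\,\M)$ gives an upper bound on $L_n(\t)$ of order $-n|\g(\t)|/\M$ using only the uniform LLNs --- no knowledge of the actual minimiser, and no uniform Wilks expansion, is needed. Without some such device your upper bound is not established under the stated assumptions.

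Second, your denominator bound is false as written: you claim $\mathbf{L}(\t)\le C$ with high probability uniformly over a \emph{fixed} ball $B_\del(\t_0)$, but by Theorem \ref{thm : Wilks} itself, for any fixed $\t$ with $a(\t)\ne 0$ the statistic $\mathbf{L}(\t)$ diverges at rate $n/\m$; the appeal to \citep{qin_lawless_1994} only covers local alternatives collapsing to $\t_0$ at rate $n^{-1/2}$, not a fixed ball. To repair this the ball must shrink with $n$ --- radius of order $\M/n$ (up to the slack $n^{\del/2}$), so that the Lipschitz bound $|\g(\t)|\le c|\t-\t_0|$ from (B5) keeps $n|\g(\t)|/\M$ under control --- and then $\pi(B_\del(\t_0))$ is no longer a fixed constant but polynomially small in $n$. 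That is still harmless against the exponentially decaying numerator, but it has to be tracked explicitly, which is precisely what the paper does via Jensen's inequality, (B5) and (B7) on a neighbourhood $U$ with $\mathrm{Vol}(U)\asymp (m_n/n^{1+\del/2})^d$, yielding the lower bound $\exp(-an^{1-\del/2}/m_n)$ for the denominator.
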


Theorem \ref{thm : consistent} shows that as $n\rightarrow\infty$, for any bounded continuous function $\mathfrak{f}$ and for any open ball $B\subset\mathbb{R}^d$ containing $\t_0$, we have $\int_{B^C}\mathfrak{f}(\t)\Pi^{(n)}(\t)d\t\rightarrow 0$.
This is accomplished by, first, by finding an upper bound of $\exp\l(\sum^n_{i=1}\hw_i\r)$ in the numerator of \eqref{eq:post}, roughly giving an exponential decay in $n$ outside the set $B$ (up to a logarithmic factor), and then by finding a sub-exponential lower bound of the denominator.

\section{Degeneracies in Statistical Networks}\label{sec:graph}
\textcolor{black}{An important example of a mis-specified constraint can be found in the statistical analysis of social networks.  Here the observations $X_i$ are i.i.d. samples from a set of random graphs $G_N$ with $N$ vertices, and $h$ a real-valued observable on the graph, e.g. the (centred) triangle count.
  The parameter $\t$ can be taken to be the connection probability for each edge.}

\textcolor{black}{A popular class for statistical models for social networks are the so called exponential random graph models (ERGM).  Given a observed value of the observable $h$, say $h_0$, these models assign probabilities to each graph $X_i$ by maximising the entropy.  More precisely, the probability $\h{v}=(\h{v}_i)^n_{i=1}$ are given by:
  \begin{equation}\label{eq:et}
  \h{v}=\mathrm{arg}\max_{v\in\mathcal{V}}\sum^n_{i=1}-v_i\log v_i,
  \end{equation}
  where
  \begin{equation}\label{eq:graphcons}
  \mathcal{V}=\l\{v~:~\sum^v_{i=1}v_ih_i=h_0\r\}\cap\Delta_{n-1}.
  \end{equation}
}
\textcolor{black}{  Via a simple computation, one can show that for each $i=1$, $2$, $\ldots$, $n$, we have $\hat{v}_i=\mbox{exp}(\kappa(h_i-h_0))/\sum^n_{i=1}\mbox{exp}(\kappa(h_i-h_0))$, where the Lagrange multiplier $\kappa$ satisfy the equation $\sum^n_{i=1}(h_i-h_0)\mathrm{exp}(\kappa(h_i-h_0))=0$.}

\textcolor{black}{Estimating a distribution function by entropy maximisation has a long history. The procedure described above was introduced by \citep{jaynes_1957a,jaynes_1957b} and seen many applications in statistics and econometrics.  Some recent examples include, \citep{schennach_2005}, \citep{chaudhuri_ghosh_2011}, \citep{chib_shin_simoni_2018} etc.
  Both entropy maximisation and empirical likelihood can be viewed to be minimising two limiting divergences in the Cressie-Read family \citep{cressie_read_1984}.  Their asymptotic properties under the correct specification are known to be similar \citep{kitamura_2001}.}

\begin{figure}[h!]

  \centering
  \begin{subfigure}[Empirical Likelihood\label{fig:distGrEl}]{
      \resizebox{2.4in}{2.4in}{\includegraphics{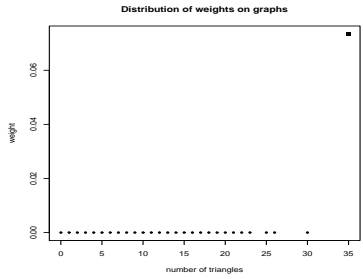}}}
      \end{subfigure}\qquad
      \begin{subfigure}[Entropy Maximisation\label{fig:distGrEt}]{
      \resizebox{2.4in}{2.4in}{\includegraphics{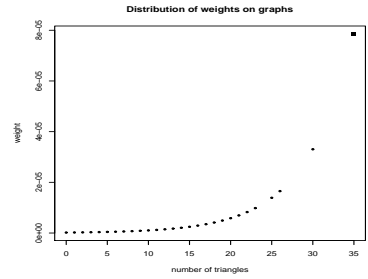}}}
      \end{subfigure}
      \begin{subfigure}[Empirical Likelihood \label{fig:margGrEl}]{
      \resizebox{2.4in}{2.4in}{\includegraphics{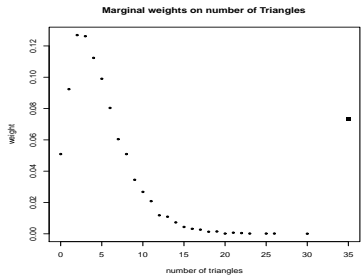}}}
      \end{subfigure}\qquad
      \begin{subfigure}[Entropy Maximisation \label{fig:margGrEt}]{
      \resizebox{2.4in}{2.4in}{\includegraphics{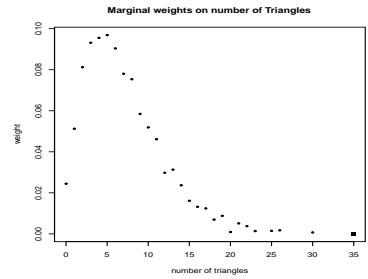}}}
      \end{subfigure}
      \caption{Comparison of Empirical Likelihood and maximum entropy methods}
      \label{fig:graph}
  \end{figure}

\textcolor{black}{  In the present situation however, since the expectation of $h_1$ will hardly be equal to $h_0$, in general, by construction, the constraints in \eqref{eq:graphcons} are mis-specified.  In ERGM literature (see, e.g. \citep{chatterjee_diaconis_2013}), it is known that degeneracies tend to appear, in the sense that the optimal weight distribution tends to put large weights on nearly empty or nearly complete graphs or both.  Such degeneracies are considered to be inconvenient for statistical inference, specially for model interpretation, and several authors have considered possibilities for avoiding them \citep{horvat_czabarke_toroczkai_2015,fellows_handcock_2017}.
}
\textcolor{black}{  Results in Section \ref{sec:main} indicate that the same degeneracies would be observed if the probabilities on the observed graphs are estimated using empirical likelihood instead, that is, if $\hat{v}$ is obtained by maximising the $\prod^n_{i=1}v_i$ over $\mathcal{V}$.}

\textcolor{black}{  We compare the degrees of degeneracies which result from the above two procedures through a simple simulation study below.}

\textcolor{black}{    Suppose we consider a complete enumeration of Erdos-Renyi $G(N,\frac{1}{2})$ graphs on $N=7$ nodes. 
The constraint function $h$ is taken to be the triangle count of the graph.  The observed count $h_0$ is kept fixed at $7$.}

\textcolor{black}{    The results are displayed in Figure \ref{fig:graph} above.  The weights on the individual graphs obtained from empirical likelihood and entropy maximisation are displayed in Figures \ref{fig:distGrEl} and \ref{fig:distGrEt} respectively.  From these figures it is clear that the
    degree of degeneracy of the distribution obtained from the empirical likelihood is much higher than that of the distribution obtained by entropy maximisation. 
}
\textcolor{black}{    Figures \ref{fig:margGrEl} and \ref{fig:margGrEt} point towards another interesting property.  In these figures, the total probabilities of a particular numbers of triangles are displayed.  No degeneracy is seen when these marginal probabilities obtained from entropy maximisation, whereas the marginal probabilities obtained from empirical likelihood are still degenerate.
    This absence of degeneracy in Figure \ref{fig:margGrEt} is explained by the fact that the frequencies of the triangle counts form a log-concave function \citep{horvat_czabarke_toroczkai_2015}.  Similar result for the procedure based on empirical likelihood is unknown and appear to be far more complex.
}

\section{Discussion}\label{sec:discussion}


We conduct most of our investigations in the setting of constraints that are one dimensional  (i.e., the constraint function $h$ is real valued). The crucial exception to this is the Bayesian setting, and
our analysis of the BayesEL procedures actually hold in higher dimensions with additional mild assumptions. 

Extending some of our main results to higher dimension would not be straightforward.  Our asymptotic expansions use critical points of random functions, whose behaviour is poorly understood in higher dimensions.
In fact it turns out that, the behaviour of optimal weights for multi-dimensional constraints are much more complex.  We illustrate this fact using a simple example.

Suppose we generate $n=1000$ observations from a bivariate normal random variable $(X,Y)$ with zero means, unit standard deviations and correlation equal to $0.5$.  A plot of the optimal weights against the norm of the observations computed under two constraints $\sum^n_{i=1}w_iX_i=0$ and $\sum^n_{i=1}w_iY_i=0$ is presented in Figure \ref{fig:truth}.
The optimal weights seem to be confined in cone-shaped space with linear boundaries.  With small mis-specification, in particular, $\sum^n_{i=1}w_i(X_i-0.1)=0$ and $\sum^n_{i=1}w_i(Y_i+0.1)=0$ in Figure \ref{fig:smallMS}), those weights are still confined, however, the boundaries turn out to be non-linear.


\begin{figure}[h!]
  \centering
  \begin{subfigure}[True Constraints. \label{fig:truth}]{
      \resizebox{.475\columnwidth}{2.75in}{\includegraphics[angle=90,origin=c]{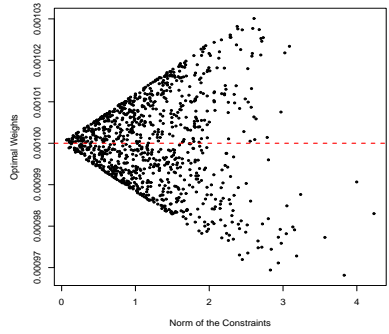}}}
  \end{subfigure}
  \begin{subfigure}[Small Mis-specification. \label{fig:smallMS}]{
      \resizebox{.475\columnwidth}{2.75in}{\includegraphics[angle=90,origin=c]{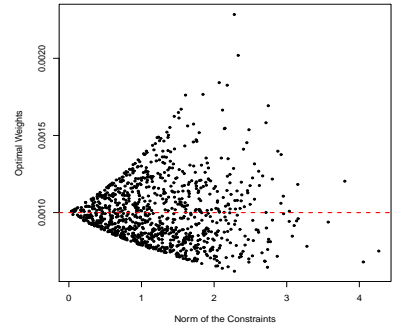}}}
  \end{subfigure}
  \begin{subfigure}[Big Mis-specification: Scatterplot.\label{fig:bigMSscatter}]{
      \resizebox{.475\columnwidth}{2.75in}{\includegraphics{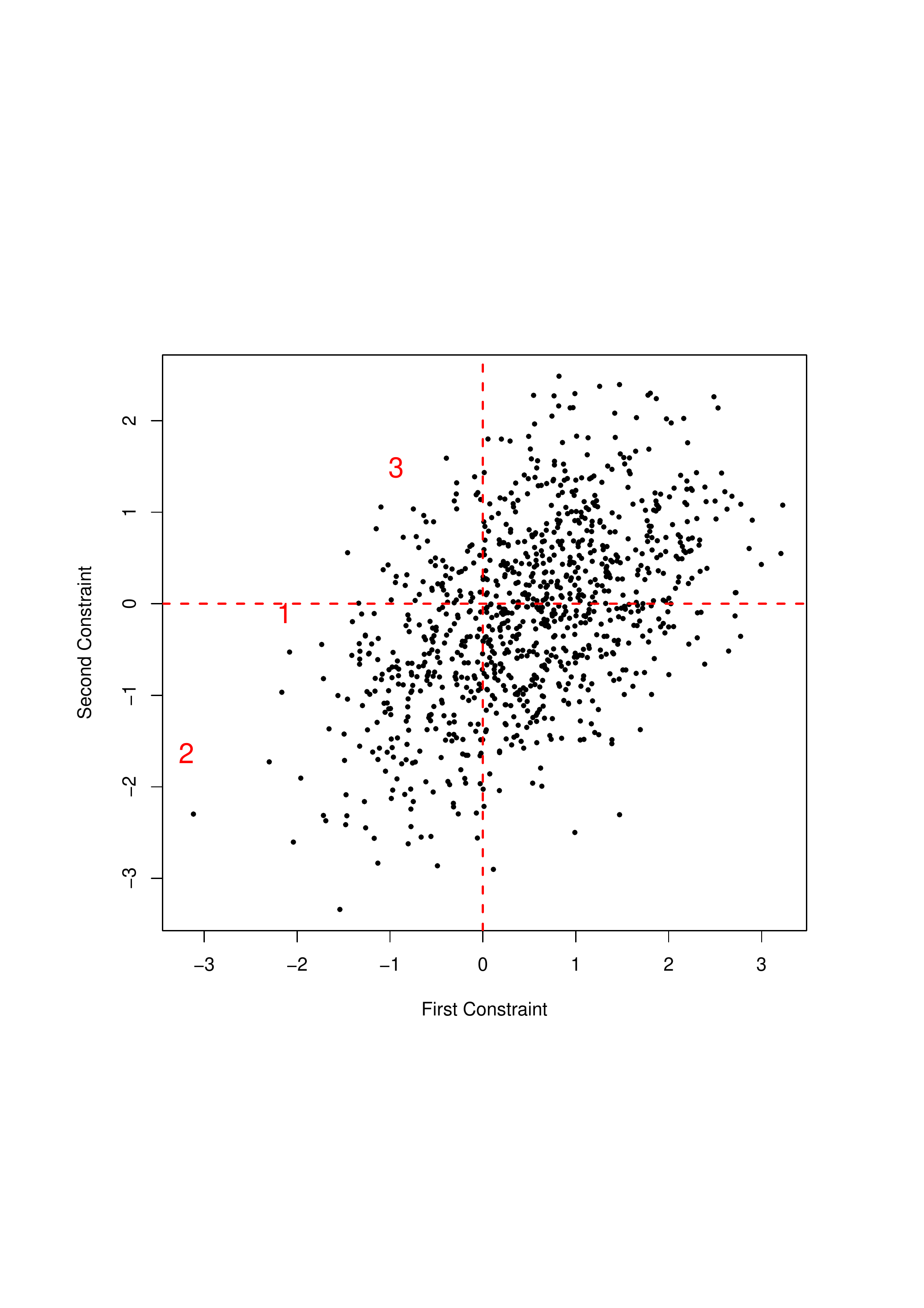}}}
  \end{subfigure}
  \begin{subfigure}[Big Mis-specification: Weights vs Norm.\label{fig:bigMSnorm}]{
      \resizebox{.475\columnwidth}{2.75in}{\includegraphics{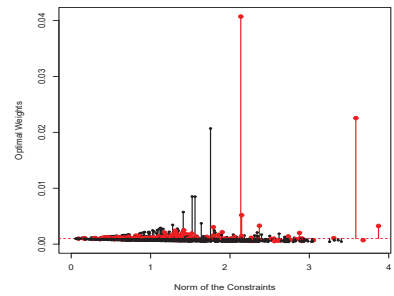}}}
  \end{subfigure}
  \caption{EL Degeneracy in higher dimension}
  \label{fig:multi}
 \end{figure}

With larger mis-specifications, the situation gets more complex.  In Figures \ref{fig:bigMSscatter} and \ref{fig:bigMSnorm}, the weights are computed under the constraints $\sum^n_{i=1}w_i(X_i+0.5)=0$ and $\sum^n_{i=1}w_i(Y_i-0.1)=0$.  In the scatter plot presented in Figure \ref{fig:bigMSscatter} the numbers $1$ to $3$,
indicate the positions of the observations with three largest weights, which turn out to be larger than $0.01$ (i.e. $10/n$).
From Figure \ref{fig:bigMSnorm}, it is clear that there are more than one weights which are relatively large compared to the rest.  The position of the observations with these degenerate weights are not easily determined either.
Clearly, the largest two weights are in the third quadrant.  However, none of them correspond to the constraint with highest norm, not even among the observations in the third quadrant (indicated in Figure \ref{fig:bigMSnorm} in red).

The results in our work also have methodological implications for the analysis of real data. A case in point is the application of empirical likelihood techniques in the context of approximate Bayesian computation. The details of this application are the subject of a related methodological paper \citep{chaudhuri2018easy}. 

\section{The order of the Lagrange multiplier $\hl$}

In this section, we establish that the correct order of the Lagrange parameter $\la$ is $\m^{-1}$, thereby proving Theorem \ref{thm : lagrange}.
In order to facilitate the proof, we will first introduce some definitions. The main proof will invoke certain auxiliary lemmas, which we will state first and prove later in the section. Throughout this section, we will work with a fixed $\t \in \Theta$, and abbreviate $a(\t)$ and $\xi_i(\t)$ simply as $a$ and $\xi_i$ respectively. 

Define the function
\begin{equation} \label{eq:g}
  g(\la)=\sum^n_{i=1}\frac{h_i}{1+\la h_i}.
 \end{equation}
Clearly, we have,
\begin{equation} \label{eq:functiong}
g(0)=\sum^n_{i=1}h_i=na+\sum^n_{i=1}\xi_i=n\left(a+\frac{\sum^n_{i=1}\xi_i}{n}\right).
\end{equation}
By the Law of Large Numbers, $\sum^n_{i=1}\xi_i/n\rightarrow 0$ almost surely as $n\rightarrow\infty$, we have $\sgn(g(0))=\sgn(a)$ for all large enough $n$. Further, we note that $\hl$ is clearly a solution of $g(\la)=0$.

We will also define three sets of indices from amongst $\{1,\dots,n\}$:

\begin{align}
\ip&=\left\{i~:~h_i>0\right\},\label{eq:ip}  \\
\ic&=\left\{i~:~h_i<0\right\},\label{eq:ic} \\
\text{and } \iz&=\left\{i~:~|h_i|>\m^{1-\delta} + |a|\right\}. \label{eq:iz}
\end{align}
By definition, $i\in\iz$ implies $|\xi_i|>\m^{1-\delta}$. Now $|\iz|=\sum^n_{i=1}1_{\{h_i\in\iz\}}$. Hence, we have
\[
\E\left[|\iz|\right]=\sum^n_{i=1}\P\left[h_i\in\iz\right]=\sum^n_{i=1}\P\left[|\xi_i|>\m^{1-\delta}\right]\le np_{n,\del}.
\]
To summarize,

\begin{equation}
\E\left[\frac{|\iz|}{n}\right]\le p_{n,\del}.
\end{equation}
\begin{remark} \label{rem:i0order}
This, in particular, implies that the random variable $|\I_0|/n$ is $O_P(p_{n,\del})=o_P(1)$.
\end{remark}

We now state two auxiliary lemmas that will be useful in the proof of Theorem \ref{thm : lagrange}.

\begin{lemma} \label{lem:interval}
Suppose $a(\t)>0$. Then the optimal Lagrange parameter $\hat{\lambda}$ belongs to the interval $(0,1/|h_{\max}|)$ with high probability. Furthermore, the function $g$, as defined in \eqref{eq:g}, is monotonically decreasing in this interval.
\end{lemma}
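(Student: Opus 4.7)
The plan is to pin down $\hl$ by analyzing $g$ on its natural domain of definition, using monotonicity and endpoint behavior. The first step is to identify the feasible domain for the Lagrange parameter. From \eqref{eq:MLE} together with $\hat{w}_i \ge 0$, we must have $1+\hl h_i > 0$ for every $i$, so $\hl$ is confined to the open interval
\[
\mathcal{J} := \l(-\frac{1}{\max_{i:h_i>0} h_i},\,\frac{1}{|h_{\max}|}\r),
\]
which is well-defined with high probability because assumption (A2) ensures the existence of $h_i$'s of both signs (since $a$ is a fixed positive constant while $|\xi_i|$ takes arbitrarily large values of either sign as $n \to \infty$). Note that $\mathcal{J}$ contains $0$ as an interior point.

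Second, I would verify the monotonicity claim by direct differentiation:
\[
g'(\la) = -\sum_{i=1}^n \frac{h_i^2}{(1+\la h_i)^2} < 0 \qquad \text{on } \mathcal{J},
\]
provided at least one $h_i \neq 0$, which holds w.h.p. Thus $g$ is strictly decreasing on all of $\mathcal{J}$, and in particular on $(0, 1/|h_{\max}|) \subset \mathcal{J}$.

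Third, I would locate the zero of $g$ via sign and boundary behavior. From \eqref{eq:functiong} and the strong law of large numbers (applicable under (A1)), $g(0) = n(a + n^{-1}\sum_{i}\xi_i) \geq na/2 > 0$ w.h.p. At the right endpoint, as $\la \uparrow 1/|h_{\max}|$, the single term $h_{\max}/(1+\la h_{\max}) = h_{\max}/(1 - \la|h_{\max}|)$ diverges to $-\infty$ (since $h_{\max} < 0$), while all the other summands remain bounded. Hence $g(\la) \to -\infty$ at this endpoint. Combined with strict monotonicity, the intermediate value theorem produces a unique zero of $g$ in $(0, 1/|h_{\max}|)$. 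Since $\hl$ is characterized as the solution of $g(\hl)=0$ inside $\mathcal{J}$ (equation \eqref{eq:hleq}) and $g$ is monotone on $\mathcal{J}$, this unique zero must be $\hl$ itself, giving $\hl \in (0, 1/|h_{\max}|)$ w.h.p.

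The routine analytical pieces (monotonicity, IVT) are straightforward; the only point demanding some care is the w.h.p.\ statement, which requires (a) both signs of $h_i$ to appear, so that $\mathcal{J}$ is a genuine nonempty interval with finite endpoints on both sides (this is where (A2) enters), and (b) $g(0) > 0$ with probability tending to one, which follows from the LLN in (A1) combined with the sign of $a(\t)$. No delicate probabilistic estimate is needed at this stage; the finer estimates on the location of $\hl$ within $(0, 1/|h_{\max}|)$ will come in the subsequent proof of Theorem \ref{thm : lagrange}.
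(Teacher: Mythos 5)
Your proposal is correct and follows essentially the same route as the paper: confine $\hl$ to the interval determined by the positivity constraints $1+\hl h_i>0$, show $g'<0$ there, and use $g(0)>0$ (via the law of large numbers and $a>0$) to conclude $\hl>0$. The only cosmetic difference is that you additionally verify $g(\la)\to-\infty$ at the right endpoint to invoke the intermediate value theorem, whereas the paper simply uses the fact that $\hl$ already exists as a root of $g$ in the interval and monotonicity forces it to lie to the right of $0$.
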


\begin{lemma} \label{lem:stick}
Suppose $a(\t)>0$. For any fixed $\epsilon>0$ (independent of $n$), if
$0 < \lambda \le \frac{1-\epsilon}{|h_{\max}|},
$
then $g(\lambda)>0$ with high probability.
\end{lemma}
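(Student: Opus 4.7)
The plan is a direct first-order estimate of $g(\lambda)$ around $\lambda=0$, combined with a crude derivative bound that exploits the restriction $\lambda \le (1-\epsilon)/|h_{\max}|$. By Lemma \ref{lem:interval}, $g$ is well-defined and monotonically decreasing on $(0, 1/|h_{\max}|)$, so it suffices to establish positivity at the right endpoint $\lambda = (1-\epsilon)/|h_{\max}|$, after which monotonicity propagates the bound to the whole interval.

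First I would use \eqref{eq:functiong} to write $g(0) = na + \sum_{i=1}^n \xi_i$; since the $\xi_i$ are i.i.d.\ with mean zero and finite variance by (A1), the law of large numbers gives $g(0) = na(1+o_P(1))$, which is strictly positive and of exact order $n$ for the fixed $a=a(\t) > 0$. Next, differentiating $g$ yields $g'(s) = -\sum_{i=1}^n h_i^2/(1+sh_i)^2$. For $s \in [0,\,(1-\epsilon)/|h_{\max}|]$ and each $i$, the denominator satisfies $1 + sh_i \ge \epsilon$: when $h_i \ge 0$ this is immediate since $1 + sh_i \ge 1 \ge \epsilon$ (taking $\epsilon \le 1$ without loss of generality), while when $h_i < 0$ we use $|h_i| \le |h_{\max}|$ to get $1 + sh_i = 1 - s|h_i| \ge 1 - s|h_{\max}| \ge \epsilon$. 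This yields the uniform bound $|g'(s)| \le \epsilon^{-2} \sum_i h_i^2$.

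By the LLN applied to $h_i^2 = (a+\xi_i)^2$ (which has finite mean $a^2 + \mathrm{Var}(\xi)$ by (A1)), $\sum_i h_i^2 = O_P(n)$. Integrating the derivative bound over $[0, \lambda]$,
\[
|g(\lambda) - g(0)| \le \lambda\, \epsilon^{-2} \sum_i h_i^2 \le \frac{1-\epsilon}{\epsilon^2\, |h_{\max}|}\, O_P(n).
\]
Assumption (A2) gives $|h_{\max}|/\m \to 1$ in probability (since $h_i = a + \xi_i$ differs from $\xi_i$ by the bounded constant $a$), and $\m \to \infty$, so the drift above is $O_P(n/\m) = o_P(n)$. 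Combining with $g(0) = na(1+o_P(1))$ of order $n$, one concludes $g(\lambda) \ge g(0) - |g(\lambda) - g(0)| = na(1+o_P(1)) - o_P(n) > 0$ with high probability, uniformly for $\lambda$ in the prescribed interval (uniformity is automatic because the drift bound is monotone in $\lambda$).

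The step needing the most care is the uniform lower bound $1 + sh_i \ge \epsilon$, which requires splitting into the cases $h_i \ge 0$ and $h_i < 0$ and using that $i=i_{\max}$ is the worst case on the negative side. Otherwise the argument is a one-term Taylor expansion: the main term $g(0)$ is of order $n$, while the drift over the interval of length $(1-\epsilon)/|h_{\max}|$ is smaller by a factor $\sim |h_{\max}| \sim \m \to \infty$. In the context of proving Theorem \ref{thm : lagrange}, this lemma combined with Lemma \ref{lem:interval} yields $\hl \ge (1-\epsilon)/|h_{\max}|$ w.h.p., supplying the lower half of the claim $\m\hl = \sgn(a) + o_P(1)$.
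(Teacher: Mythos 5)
Your argument is correct, and it takes a genuinely different and more economical route than the paper's. The paper proves positivity at $\lambda_0=(1-\epsilon)/|h_{\max}|$ by decomposing the index set as $\iz^\c\cup(\ip\cap\iz)\cup(\ic\cap\iz)$ and bounding $h_i/(1+\lambda_0 h_i)$ separately on each piece; the delicate piece is $\ic\cap\iz$, where the crude bound $|h_{\max}|/\epsilon$ must be multiplied by $|\iz|/n=O_P(p_{n,\del})$ and killed using the tail-decay assumption (A4) via Proposition \ref{prop:tail} and Remark \ref{rem:i0order}. You instead integrate the uniform derivative bound $|g'(s)|\le\epsilon^{-2}\sum_i h_i^2=O_P(n)$ over an interval of length $O(1/|h_{\max}|)=O_P(1/\m)$, so the total drift from $g(0)=n(a+o_P(1))$ is $O_P(n/\m)=o_P(n)$ and cannot overcome the order-$n$ main term. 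The key observation $1+sh_i\ge\epsilon$ (immediate for $h_i\ge0$, and from $|h_i|\le|h_{\max}|$ for $h_i<0$, since $h_{\max}=\min_i h_i$ when $a>0$) is exactly right and is precisely where the restriction $\lambda\le(1-\epsilon)/|h_{\max}|$ enters. Notably, your proof uses only (A1) and (A2) — neither (A3) nor (A4) is invoked — which is a real simplification for this particular lemma. What the paper's heavier decomposition buys is reusable machinery: the same index sets and Proposition \ref{prop:tail} are what later resolve the sub-leading $-a(\t)^{-1}/n$ term in Theorem \ref{thm : expansion}, a precision that the crude $O_P(n/\m)$ drift bound could not deliver. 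As a proof of Lemma \ref{lem:stick} itself, your version is complete.
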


The proofs of Lemmas \ref{lem:interval} and \ref{lem:stick} will be deferred to later in this section. We will first examine how, invoking these lemmas, we can complete the proof of Theorem \ref{thm : lagrange}.

\begin{proof}[Proof of Theorem \ref{thm : lagrange}]
We assume without loss of generality that $a=a(\t)>0$, the case $a<0$ can be dealt with on similar lines.
Then from Lemma \ref{lem:interval}, we obtain that the optimal Lagrange parameter $\hat{\lambda}$ lies in the interval $(0,1/|h_{\max}|)$, and the function $g$, as defined in \eqref{eq:g}, is monotonically decreasing in this interval.


%
%


From Lemma \ref{lem:stick}, we obtain that, for any fixed $\epsilon>0$ (independent of $n$), if
\[
0 < \lambda \le \frac{1-\epsilon}{|h_{\max}|},
\]
then $g(\lambda)>0$.

Since $g$ is decreasing on the interval $\left(0, 1/|h_{\max}| \right)$, the last statement would imply that, $\hat{\la}$, which is a solution of the equation $g(\la)=0$, is greater than  $ \frac{1-\epsilon}{|h_{\max}|}$. 



This implies that, for any $\eps>0$, we have
\[
\frac{(1-\epsilon)}{|h_{\max}|}\le\hat{\lambda}\le \frac{1}{|h_{\max}|},
\]
with high probability as $n \to \infty$.

In other words, for any $\eps>0$, we have
\[
|\hat{\lambda}\cdot |h_{\max}| - 1 | \le \eps
\]
with high probability as $n \to \infty$, and since by assumption (A2) we have $|h_{\max}|=\m(1+o_P(1))$, we may therefore deduce that
\[
\hat{\lambda}\m = 1 + o_P(1),
\]
as desired.
\end{proof}

We now move on to the proofs of Lemma \ref{lem:interval}. 

\begin{proof}[Proof of Lemma \ref{lem:interval}]
Recall from \eqref{eq:MLE} that the optimal weights $\hat{w}_i$ are given by $\frac{1}{n} \frac{1}{1+\hat{\la} h_i}$. Since the optimal weights are non-negative, this implies that $1+\hat{\la}h_i >0$ for each $i$. In the context of the discussion above, we can apply this in particular to $h_{\max}=-|h_{\max}|$, and obtain $1-\hat{\la}|h_{\max}|>0$, which implies that $\hat{\la}<\frac{1}{|h_{\max}|}$. If \[\tilde{h}_{\max}:=\max\{{h_i : h_i >0}\},\] then, using the fact that $1+\hat{\la}\tilde{h}_{\max}>0$, we can deduce that $\hat{\la}> - 1\big/\tilde{h}_{\max}$. Notice that $\tilde{h}_{\max}>0$, so the above argument locates the optimal value $\hat{\la}$ of the Lagrangian parameter in the interval $\left( - 1\big/\tilde{h}_{\max} , 1/|h_{\max}| \right)$.

In the interval $\left( - 1\big/\tilde{h}_{\max} , 1/|h_{\max}| \right)$, the function $g(\la)$ is continuous. The fact that 
\[g^{\prime}(\lambda)=-\sum^n_{i=1}h^2_i/(1+\lambda h_i)^2<0 \text{ for all } \lambda \in \left( - 1\big/\tilde{h}_{\max} , 1/|h_{\max}| \right)\] 
implies that $g$ is, in fact, a decreasing function on this interval. As such, $g(\la)=0$ has a unique root in the interval $\left( - 1\big/\tilde{h}_{\max} , 1/|h_{\max}| \right)$, which must be the optimal Lagrange parameter $\hat{\la}$. Observe that, from \eqref{eq:functiong}, it follows that for large enough $n$, we have $g(0)>0$ with high probability, which implies that, in fact, $\hat{\la}>0$.
\end{proof}

We now proceed to the proof of Lemma \ref{lem:stick}.

\begin{proof}[Proof of Lemma \ref{lem:stick}]


Set $\lambda_0=(1-\epsilon)/|h_{\max}|$. Since, by Lemma \ref{lem:interval}, the function $g$ is decreasing on the interval $(0,1/|h_{\max}|)$, to prove the present lemma it suffices to show that $g(\lambda_0)>0$.

We begin with the identity
\begin{equation}\label{eq:lamb}
1+\lambda_0h_i=1-\frac{(1-\epsilon)}{h_{\max}}h_i=\left(1-\frac{h_i}{h_{\max}}\right)+\epsilon\frac{h_i}{h_{\max}.}
\end{equation}

We will decompose the index set $\mathcal{I}=\{1,\ldots,n\}$ as \[\mathcal{I}=\iz^\c \cup (\ip \cap \iz) \cup (\ic \cap \iz).\] 
We will accordingly consider three cases in order to obtain bounds on $\frac{h_i}{1+\lambda_0h_i}$.

\noindent{\bf Case $1$:} Suppose $i\in \iz^\c$.  For any $i\in\iz^\c$, by definition of $\iz^\c$ (see \eqref{eq:iz}), we have $|h_i/h_{\max}|\le \m^{-\delta}(1+o_P(1))$. It may be emphasised that this $o_P(1)$ is uniform in the index $i$, since it originates from the asymptotics of $h_{\max}$ as in assumption (A2). Thus, for any $i\in\iz^\c$, using \eqref{eq:lamb} we have
\[
1-\m^{-\delta} - o_P(1) \le 1+\lambda_0h_i\le 1+\m^{-\delta} + o_P(1) .
\]

So for all $i\in\iz^\c$, the relation
\begin{equation} \label{eq:i0bound}
(1-\sgn(h_i)\m^{-\delta} - o_P(1))h_i\le \frac{h_i}{1+\lambda_0h_i}\le (1 + \sgn(h_i)\m^{-\delta} + o_P(1) )h_i
\end{equation}
holds, with the $o_P(1)$ term being uniform in $i$.

\noindent{\bf Case $2$:} Suppose $i \in \ip \cap \iz$.

For $i \in \ip \cap \iz$, we observe that $\frac{h_i}{1+ \la_0 h_i} >0$, because $\la_0>0$.

\noindent{\bf Case $3$:} Finally, suppose $i \in \ic \cap \iz$. For all $i\in \ic\cap\iz$, we note that, by definition of $h_{\max}$, we have $|h_i/h_{\max}|\le 1$. Consequently, from \eqref{eq:lamb}, we deduce that
\[
1+\lambda_0h_i\ge \epsilon\frac{h_i}{h_{\max}} = \epsilon \frac{|h_i|}{|h_{\max}|},
\]
where, in the last step, we have used the fact that $h_i<0$ whenever $i \in \ic$.
The above arguments imply that
\[
\l|\frac{h_i}{1+\la_0 h_i} \r| \le \frac{\mid h_i\mid}{\epsilon\frac{\mid h_i\mid}{\mid h_{\max}\mid}}=\frac{\mid h_{\max}\mid}{\epsilon}.
\]


This completes the analysis of the three cases to obtain a lower bound on $1 + \lambda_0 h_i$. We can summarize the bounds as :
\begin{equation*}
\frac{h_i}{1 + \la_0 h_i} \ge \begin{cases}
(1- \sgn(h_i)\m^{-\delta} - o_P(1))h_i &\text{if $i \in \iz^\c$},\\
0 &\text{if $i \in \ip \cap \iz$}, \\
-\frac{|h_{\max}|}{\eps}  &\text{if $i \in \ic \cap \iz$}.
\end{cases}
\end{equation*}

We may rewrite the lower bound for $\iz^\c$ as 
\begin{equation} \label{eq:ri0}
\frac{h_i}{1 + \la_0 h_i} \ge h_i - \sgn(h_i)\m^{-\delta} h_i - o_P(1) \cdot h_i.
\end{equation} 
Putting 
\begin{equation} \label{eq:ri}
r_i:=  \sgn(h_i)\m^{-\delta} h_i - o_P(1) \cdot h_i
\end{equation}
as in the lower bound \eqref{eq:ri0}, we set 
\begin{equation} \label{eq:R}
\mathcal{R}=\frac{1}{n}\sum_{i \in \iz^\c} r_i.
\end{equation}
Proposition \ref{prop:tail} establishes that the random variable $\mathcal{R}$ is, in fact, $o_P(1)$.

Now it follows that, using the analysis of Cases 1-3, we have
\begin{align}
\frac{g(\lambda_0)}{n}=&\frac{1}{n}\sum_{i=1}\frac{h_i}{1+\lambda_0h_i} \nonumber\\
= &\frac{1}{n}\sum_{i \in \iz^\c}\frac{h_i}{1+\lambda_0h_i} + \frac{1}{n}\sum_{i \in \ip \cap \iz}\frac{h_i}{1+\lambda_0h_i} + \frac{1}{n}\sum_{i \in \ic \cap \iz}\frac{h_i}{1+\lambda_0h_i} \nonumber\\
\ge &\frac{1}{n}\sum_{i \in \iz^\c}\frac{h_i}{1+\lambda_0h_i} + \frac{1}{n}\sum_{i \in \ic \cap \iz}\frac{h_i}{1+\lambda_0h_i} \nonumber\\
\ge &\frac{1}{n}\left(\sum_{i\in\iz^\c}h_i \l(1-\sgn(h_i)\m^{-\del} - o_P(1)\r)\r)-\frac{\mid h_{\max}\mid}{\epsilon}\frac{\mid \ic \cap \iz\mid}{n}\nonumber\\
\ge &\frac{1}{n}\left(\sum_{i\in\iz^\c}h_i\right) - \mathcal{R} -\frac{\mid h_{\max}\mid}{\epsilon}\frac{\mid \iz\mid}{n}\nonumber\\
\ge &a -o_P(1), \label{eq:l}
\end{align}
where, in the last step, we invoke Proposition \ref{prop:tail} and Remark \ref{rem:i0order}.

Thus, we have $g(\lambda_0) \ge n(a -o_P(1))>0$ with high probability, since $a>0$. This completes the proof. 

\end{proof}

We end this section with Proposition \ref{prop:tail} and its proof.

\begin{proposition} \label{prop:tail}
Suppose $a=a(\t)>0$. Then we have 
\begin{itemize}
\item (i) $\frac{1}{n}\left(\sum_{i\in\iz^\c}h_i\right) = a - o_P(1)$.
\item(ii) $\mathcal{R}=o_P(1)$, where $\mathcal{R}$ is as defined in \eqref{eq:ri} and \eqref{eq:R}.
\end{itemize}
\end{proposition}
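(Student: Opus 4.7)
My plan is to deduce both parts from the decomposition $\{1,\dots,n\}=\iz\cup\iz^{\c}$ together with three ingredients already available: the Law of Large Numbers applied to $\xi_1,\ldots,\xi_n$; the bound $\E[|\iz|/n]\le p_{n,\del}$ recorded just before the statement; and the extreme-value estimate $\max_i|h_i|=\m(1+o_P(1))$ supplied by (A2) (with $a$ a fixed constant). The main (though mild) technical point is verifying that (A4) is strong enough to absorb a factor of $\m\cdot p_{n,\del}$; this is automatic since (A4) provides the much stronger $\m^{\gamma+2}\log n\cdot p_{n,\del}\to 0$, which in particular forces $\m\cdot p_{n,\del}\to 0$.

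For part (i) I would write
\begin{equation*}
\frac{1}{n}\sum_{i\in\iz^{\c}}h_i \;=\; \frac{1}{n}\sum_{i=1}^n h_i \;-\; \frac{1}{n}\sum_{i\in\iz}h_i.
\end{equation*}
Substituting $h_i=a+\xi_i$ in the first sum and applying the LLN yields $\frac{1}{n}\sum_{i=1}^n h_i=a+o_P(1)$. For the tail sum I would use the crude estimate
\begin{equation*}
\Bigl|\frac{1}{n}\sum_{i\in\iz}h_i\Bigr| \;\le\; \max_{1\le i\le n}|h_i|\cdot\frac{|\iz|}{n} \;\le\; \m(1+o_P(1))\cdot\frac{|\iz|}{n},
\end{equation*}
and then apply Markov's inequality to $\E[|\iz|/n]\le p_{n,\del}$ to obtain $|\iz|/n=O_P(p_{n,\del})$. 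The tail sum is therefore $O_P(\m\cdot p_{n,\del})=o_P(1)$ by the observation above, which proves (i).

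For part (ii) I would use the explicit form of $r_i$ in \eqref{eq:ri}, noting from \eqref{eq:i0bound} that the $o_P(1)$ factor appearing there is uniform in $i$ and can therefore be replaced by a single random quantity $\zeta_n\to 0$ in probability. Using $\sgn(h_i)h_i=|h_i|$, this gives
\begin{equation*}
\mathcal{R} \;=\; \m^{-\del}\cdot\frac{1}{n}\sum_{i\in\iz^{\c}}|h_i| \;-\; \zeta_n\cdot\frac{1}{n}\sum_{i\in\iz^{\c}}h_i.
\end{equation*}
The second term is $o_P(1)\cdot(a+o_P(1))=o_P(1)$ by part (i). For the first I would bound $\frac{1}{n}\sum_{i\in\iz^{\c}}|h_i|\le |a|+\frac{1}{n}\sum_{i=1}^n|\xi_i|$, which is $O_P(1)$ by the LLN under the first-moment condition in (A1); multiplying by $\m^{-\del}=o(1)$ then completes the bound. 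No more delicate analysis of critical points or extreme order statistics is required—the proposition is essentially a routine consequence of the LLN together with the tail controls in (A2) and (A4).
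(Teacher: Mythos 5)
Your proposal is correct and follows essentially the same route as the paper: the identical decomposition $\{1,\dots,n\}=\iz\cup\iz^{\c}$ for part (i) with the crude bound $\max_i|h_i|\cdot|\iz|/n=O_P(\m\,p_{n,\del})=o_P(1)$, and for part (ii) the same exploitation of the uniformity of the $o_P(1)$ term in \eqref{eq:i0bound} together with the law of large numbers for $\frac{1}{n}\sum_i|h_i|$. The only cosmetic difference is that the paper extends the sum in (ii) to all indices and invokes $\E[|h_1|]<\infty$ directly, whereas you bound it via $|a|+\frac{1}{n}\sum_i|\xi_i|$; these are equivalent.
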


\begin{proof}
For $\delta$ small enough such that the assumption (A4) is valid, we  observe that 
\[
\frac{1}{n}\sum_{i\in\iz^\c}h_i=\frac{1}{n}\sum^n_{i=1}h_i-\frac{1}{n}\sum_{i\in\iz}h_i.
\] But
\[
\frac{1}{n}\left|\sum_{i\in\iz}h_i\right|\le\mid h_{\max}\mid\frac{\mid\iz\mid}{n}=(\m+o_P(1))O_P(p_{n,\del})=o_P(1),
\]
where, in the last step, we have used (A4). On the other hand, by the law of large numbers, $\frac{1}{n}\sum^n_{i=1}h_i = a +o_P(1)$. This completes the proof of part (i) of the proposition. 

For part (ii), we proceed as 
\begin{align*}
|\mathcal{R}| = & \l|\frac{1}{n} \sum_{i \in \iz^\c}  \l( \sgn(h_i)\m^{-\delta} h_i - o_P(1) \cdot h_i    \r) \r|  \\
 &\le  \m^{-\delta} \cdot \frac{1}{n} \sum_{i=1}^n   |h_i| + o_P(1) \cdot \frac{1}{n} \sum_{i=1}^n  |h_i| ,   \\
\end{align*}
where, in the last step, we have used the fact that the $o_P(1)$ term from \eqref{eq:i0bound} is uniform in $i$.  By the law of large numbers, $ \frac{1}{n} \sum_{i=1}^n   |h_i|  = \E[|h_1|] + o_P(1)$, which implies that $\mathcal{R}=o_P(1)$, as desired.


\end{proof}

\section{A canonical expansion for $\hl$}
In this section, we rigorously establish the canonical expansion \eqref{eq : expansion}.  We work with a fixed $\t \in \Theta$, and abbreviate $a(\t)$ and $\xi_i(\t)$ simply as $a$ and $\xi_i$ respectively.
\begin{proof}[Proof of Theorem \ref{thm : expansion}]
We will work in the setting  $a(\t)>0$; the case $a(\t)<0$ will follow along similar lines.

When $a=a(\t)>0$, we have  $h_{\max}<0$ and from Theorem \ref{thm : lagrange}, we obtain  $\hl=-\frac{1}{h_{\max}}+\zeta = \frac{1}{|h_{\max}|}+\zeta$, for some $\zeta\le 0$.

We further recall that $|h_{\max}|=\m(1+o_P(1))$ (assumption (A2) on rates of growth of maxima) and $\mid \zeta\mid=o_P\left(\frac{1}{\m}\right)$ (from Theorem \ref{thm : lagrange}). Together, these imply that $\hl=-\frac{1}{\m}+o_P(\frac{1}{\m})$.

Without loss of generality, let $h_1=h_{\max}$. Then we have
\[
\frac{h_1}{1+\hat{\la} h_1}=\frac{h_1}{1+\left(-\frac{1}{h_{\max}}+\zeta\right)h_{\max}}=\frac{1}{\zeta}.
\]
So we get,
\[
\sum^n_{i=1}\frac{h_i}{1+\hat{\la} h_i}=\frac{1}{\zeta}+\sum^n_{i=2}\frac{h_i}{1+\hat{\la} h_i}=0
\]
That is:
\begin{equation}\label{eq:x}
\frac{1}{\zeta}=-\sum^n_{i=2}\frac{h_i}{1+\hat{\la} h_i}
\end{equation}

Next investigate the R.H.S. of \eqref{eq:x}. To this end, we will express it as 
\begin{equation} \label{eq:x-expansion}
\sum^n_{i=2}\frac{h_i}{1+\hat{\la} h_i} = \l( \sum_{\{2,\ldots,n\}\cap\iz^\c} \frac{h_i}{1+\hat{\la} h_i} \r) + \l( \sum_{\{2,\ldots,n\}\cap \ip \cap \iz} \frac{h_i}{1+\hat{\la} h_i} \r) +  \l( \sum_{\{2,\ldots,n\}\cap \ic \cap \iz} \frac{h_i}{1+\hat{\la} h_i}  \r),
\end{equation}
where the sets $\iz,\ip$ and $\ic$ are as in \eqref{eq:iz}, \eqref{eq:ip} and \eqref{eq:ic} respectively. We will deal with these three summands separately, respectively in Propositions \ref{prop:izc}, \ref{prop:ip} and \ref{prop:ic}.  

Combining \eqref{eq:x} and \eqref{eq:x-expansion} with Propositions \ref{prop:izc}, \ref{prop:ip} and \ref{prop:ic}, we obtain
\begin{align}
\frac{1}{\zeta}&=-n(a+o_P(1)),\nonumber\\
\text{so that }\zeta&=-\frac{a^{-1}}{n}+o_P\left(\frac{1}{n}\right),\nonumber
\end{align}
which completes the proof of Theorem \ref{thm : expansion}.
\end{proof}

We now move on to first to the statements and subsequently to the proofs of Propositions \ref{prop:izc}, \ref{prop:ip} and \ref{prop:ic}.
We begin with
\begin{proposition} \label{prop:izc}
Let $a(\t)=a>0$ and $h_{\max}=h_1$. Then we have,
\[\frac{1}{n}\sum_{\{2,\ldots,n\}\cap\iz^\c} \frac{h_i}{1+\hat{\la} h_i} = a +o_P(1).\]
\end{proposition}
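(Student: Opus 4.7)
The plan is to combine the precise first-order location of $\hl$ given by Theorem \ref{thm : lagrange} with an algebraic decomposition of the summands on the set $\iz^\c$, where the truncation makes each $|h_i|$ small relative to $1/\hl$. Since $a>0$, Theorem \ref{thm : lagrange} yields $\hl = \m^{-1}(1+o_P(1))$, and the defining inequality for $\iz^\c$ (see \eqref{eq:iz}) gives the deterministic bound $|h_i| \le \m^{1-\delta} + |a|$ for all $i \in \iz^\c$. Multiplying the two,
\[
\max_{i \in \iz^\c} |\hl h_i| \le \frac{1+o_P(1)}{\m}\bigl(\m^{1-\delta}+|a|\bigr) = o_P(1),
\]
so $1 + \hl h_i \ge 1/2$ uniformly on $\iz^\c$ with high probability.

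The main step is to use the identity $\frac{h_i}{1+\hl h_i} = h_i - \frac{\hl h_i^2}{1+\hl h_i}$ and split the target sum into a linear main term and a quadratic remainder. For the linear term, I would first observe that since $h_1 = h_{\max}$ satisfies $|h_1|=\m(1+o_P(1))$ by (A2), we have $1 \in \iz$ w.h.p., hence $\{2,\ldots,n\}\cap\iz^\c = \iz^\c$ on this event. Then
\[
\frac{1}{n}\sum_{i \in \iz^\c} h_i = \frac{1}{n}\sum_{i=1}^n h_i - \frac{1}{n}\sum_{i \in \iz} h_i,
\]
where the first term converges to $a$ by the law of large numbers, and the second is bounded in absolute value by $\max_i |h_i| \cdot |\iz|/n = O_P(\m) \cdot O_P(p_{n,\delta}) = o_P(1)$, invoking (A2) together with (A4).

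For the quadratic remainder, the uniform lower bound $1+\hl h_i \ge 1/2$ on $\iz^\c$ yields
\[
\left| \frac{1}{n}\sum_{i \in \iz^\c} \frac{\hl h_i^2}{1+\hl h_i} \right| \le \frac{2|\hl|}{n} \sum_{i=1}^n h_i^2,
\]
and since $h_i = a + \xi_i$ with $\E[\xi_i^2]<\infty$ by (A1), the law of large numbers gives $\frac{1}{n}\sum_i h_i^2 = \E[h_1^2] + o_P(1)$. Combined with $|\hl| = O_P(\m^{-1})$, the right-hand side is $O_P(\m^{-1}) = o_P(1)$. Adding the two contributions produces the desired expansion $a + o_P(1)$.

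The only point requiring care is the \emph{uniformity} in $i$ of the $o_P(1)$ bound on $\hl h_i$; this is clean because $\hl$ is a single random variable and the truncation $|h_i| \le \m^{1-\delta}+|a|$ on $\iz^\c$ is deterministic, so a single invocation of Theorem \ref{thm : lagrange} supplies the uniform control. No finer concentration machinery is needed beyond the standard law of large numbers.
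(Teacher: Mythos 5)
Your proposal is correct and follows essentially the same route as the paper: both arguments use Theorem \ref{thm : lagrange} together with the truncation defining $\iz^\c$ to get the uniform bound $\max_{i\in\iz^\c}|\hl h_i|=o_P(1)$, observe that $h_1=h_{\max}\in\iz$ w.h.p.\ so the index restriction is immaterial, and reduce the main term to $\frac{1}{n}\sum_{i\in\iz^\c}h_i=a+o_P(1)$ via the law of large numbers and the bound $\E[|\iz|/n]\le p_{n,\del}$ with (A4). The only (harmless) difference is bookkeeping: you control the error additively through the identity $\frac{h_i}{1+\hl h_i}=h_i-\frac{\hl h_i^2}{1+\hl h_i}$ and the second moment of $h$, whereas the paper factors out a uniform multiplicative $1+o_P(\m^{-\del})$ correction; your version is, if anything, slightly more careful about the $i$-dependence of that correction.
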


We continue with 
\begin{proposition} \label{prop:ip}
Let $a(\t)=a>0$ and $h_{\max}=h_1$. Then we have,
\[\frac{1}{n}\sum_{\{2,\ldots,n\}\cap\ip\cap\iz} \frac{h_i}{1+\hat{\la} h_i} = o_P(1).\]
\end{proposition}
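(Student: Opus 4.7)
The plan is to bound each summand $\frac{h_i}{1+\hat{\la} h_i}$ uniformly using the asymptotics of $\hat{\la}$ from Theorem \ref{thm : lagrange}, and then control the cardinality of $\{2,\ldots,n\} \cap \ip \cap \iz$ using the tail bound from (A4). The key qualitative fact is that for $i \in \ip$ we have $h_i > 0$, and from Lemma \ref{lem:interval} we know $\hat{\la} > 0$, so the denominator $1+\hat{\la} h_i$ is strictly larger than $1$ and there is no cancellation to worry about. This makes this summand the easiest of the three to handle in the proof of Theorem \ref{thm : expansion}.

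First, I would establish the pointwise bound. For $h_i > 0$ and $\hat{\la} > 0$, a direct computation gives
\[
\frac{h_i}{1+\hat{\la} h_i} = \frac{1}{\hat{\la} + 1/h_i} \le \frac{1}{\hat{\la}}.
\]
By Theorem \ref{thm : lagrange} and (A2), $\hat{\la} = \m^{-1}(1+o_P(1))$, so $1/\hat{\la} = \m(1+o_P(1))$. This uniform bound, valid for all $i \in \ip$ and in particular for all $i \in \{2,\ldots,n\} \cap \ip \cap \iz$, gives
\[
\sum_{\{2,\ldots,n\}\cap \ip\cap \iz} \frac{h_i}{1+\hat{\la} h_i} \le |\ip \cap \iz| \cdot \m(1+o_P(1)).
\]

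Second, I would control the cardinality of $\ip \cap \iz$. By definition \eqref{eq:iz}, $i \in \iz$ forces $|\xi_i| > \m^{1-\delta}$, so
\[
\E\left[|\ip \cap \iz|\right] \le \sum_{i=1}^n \P\left[|\xi_i(\t)| > \m^{1-\delta}\right] = n\, p_{n,\delta},
\]
and by Markov's inequality $|\ip \cap \iz| = O_P(n\, p_{n,\delta})$. Combining the two bounds,
\[
\frac{1}{n}\sum_{\{2,\ldots,n\}\cap \ip\cap \iz} \frac{h_i}{1+\hat{\la} h_i} = O_P(\m \cdot p_{n,\delta}),
\]
and assumption (A4), which yields $\m^{\gamma+2}\log n \cdot p_{n,\delta} \to 0$, is far more than enough to guarantee $\m \cdot p_{n,\delta} \to 0$. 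This gives the claimed $o_P(1)$ bound.

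I do not expect a serious obstacle here: the proof is essentially one-sided because both $h_i$ and $\hat{\la}$ are positive on this index set, so the pointwise bound $\frac{h_i}{1+\hat{\la} h_i} \le \hat{\la}^{-1}$ is straightforward and sharp enough. The only mild care needed is to invoke Theorem \ref{thm : lagrange} (not just feasibility) to certify that $\hat{\la}^{-1} = O_P(\m)$ rather than something larger, and to note that (A4) has been calibrated precisely so that the product $\m \cdot p_{n,\delta}$ is negligible. The genuinely delicate summand is the one over $\{2,\ldots,n\}\cap \ic \cap \iz$ in Proposition \ref{prop:ic}, where $1+\hat{\la} h_i$ can be small since $h_i < 0$ while $\hat{\la} h_{\max} \approx -1$; that is where the finer part of the canonical expansion is genuinely used.
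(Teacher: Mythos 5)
Your proof is correct and follows essentially the same route as the paper: a uniform $\m(1+o_P(1))$ bound on each summand, followed by the cardinality estimate $\E[|\ip\cap\iz|]\le n\,p_{n,\del}$ and assumption (A4). The only cosmetic difference is that the paper bounds the summand by $h_i\le\m(1+o_P(1))$ using only $\hat{\la}>0$ and (A2), whereas you use $1/\hat{\la}=\m(1+o_P(1))$ via Theorem \ref{thm : lagrange}; both yield the same order and the rest of the argument is identical.
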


Finally, we state
\begin{proposition} \label{prop:ic}
Let $a(\t)=a>0$ and $h_{\max}=h_1$. Then we have,
\[\frac{1}{n}\sum_{\{2,\ldots,n\}\cap\ic\cap\iz} \frac{h_i}{1+\hat{\la} h_i} = o_P(1).\]
\end{proposition}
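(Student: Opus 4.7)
The plan is to obtain a uniform lower bound on the denominator $1+\hl h_i$ for $i \in \ic \cap \iz$, $i \neq 1$, and then to control the sum by exploiting the fact that $|\iz|$ itself is small in view of assumption (A4). The main leverage will come from the spacing assumption (A3): since $h_1 = h_{\max}$ corresponds to the most negative $\xi_i$, for every other negative $h_i$ the value $|h_i|$ is bounded away from $|h_{\max}|$ by at least $\m^{-\gamma}$ (up to the shift by $a$, which cancels).

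First, I would invoke Theorem \ref{thm : lagrange} together with Lemma \ref{lem:interval} to write $\hl = \frac{1}{|h_{\max}|} + \zeta$, where $\zeta \le 0$ and $|\zeta| = o_P(1/\m)$. For $i \in \ic$ (so $h_i<0$), I would compute
\[
1 + \hl h_i = 1 - \frac{|h_i|}{|h_{\max}|} + \zeta h_i \;\ge\; 1 - \frac{|h_i|}{|h_{\max}|},
\]
using that $\zeta \le 0$ and $h_i \le 0$ make $\zeta h_i \ge 0$. Next, for $i \neq 1$ with $h_i<0$, assumption (A3) applied to $\xi^-_{\max}$ and $\xi^-_{\max,2}$ (combined with $h_i = a+\xi_i$ so that the shifts by $a$ cancel) gives $|h_i| \le |h_{\max}| - \m^{-\gamma}$ with high probability. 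Using $|h_{\max}| = \m(1+o_P(1))$ from (A2), I obtain
\[
1 + \hl h_i \;\ge\; \frac{\m^{-\gamma}}{|h_{\max}|} \;\ge\; \frac{1}{2}\,\m^{-\gamma-1}
\]
with high probability, uniformly in $i \in \ic$, $i\neq 1$.

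With this lower bound in hand, each summand can be bounded via $|h_i| \le |h_{\max}| = \m(1+o_P(1))$, yielding the uniform estimate
\[
\left|\frac{h_i}{1+\hl h_i}\right| \;\le\; \frac{\m(1+o_P(1))}{\tfrac12\m^{-\gamma-1}} \;=\; O_P(\m^{\gamma+2}).
\]
Summing over $i \in \{2,\ldots,n\}\cap\ic\cap\iz$ and dividing by $n$ gives
\[
\frac{1}{n}\left|\sum_{\{2,\ldots,n\}\cap\ic\cap\iz}\frac{h_i}{1+\hl h_i}\right| \;\le\; \frac{|\iz|}{n}\cdot O_P(\m^{\gamma+2}).
\]
By Remark \ref{rem:i0order}, $|\iz|/n = O_P(p_{n,\del})$, and assumption (A4) guarantees $\m^{\gamma+2} p_{n,\del} \to 0$, so the whole expression is $o_P(1)$.

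The main obstacle is step three, namely ensuring that the denominator $1+\hl h_i$ is not anomalously small. This is exactly what (A3) is designed to prevent: without the separation between $\xi^-_{\max}$ and $\xi^-_{\max,2}$, the second-smallest $h_i$ could be arbitrarily close to $h_{\max}$ and render $1+\hl h_i$ too small to control by this argument. Once the $\m^{-\gamma-1}$ lower bound is secured, the rest is a direct counting estimate that matches precisely the quantitative content of (A4).
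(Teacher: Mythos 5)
Your argument is correct and follows essentially the same route as the paper: both proofs lower-bound the denominator $1+\hl h_i$ by $1-|h_i|/|h_{\max}|$ (using $\zeta\le 0$ and $h_i<0$ to discard the $\zeta h_i$ term), invoke the spacing assumption (A3) to get a per-term bound of order $\m^{\gamma+2}$, and close with the counting estimate $|\iz|/n=O_P(p_{n,\del})$ together with (A4). No substantive difference.
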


We now provide the proofs of Propositions \ref{prop:izc} through \ref{prop:ic} in succession.

\begin{proof}[Proof of Proposition \ref{prop:izc}]
For $i\in\iz^\c$, we have $|h_i|=O\left(\m^{1-\delta}\right)$ by definition of $\iz$. This implies that
\[
\frac{h_i}{1+\hat{\la} h_i}=\frac{h_i}{1+\left(-\frac{1}{\m}+ o_P(1/\m)\right)h_i}=\frac{h_i}{1+o_P\left(\m^{-\del}\right)}=h_i\bigg(1+o_P\left(\m^{-\del}\right)\bigg).
\]
We then observe that
\begin{align}
\frac{1}{n}\sum_{\{2,\ldots,n\}\cap\iz^\c}\frac{h_i}{1+\hat{\la} h_i}=&\left\{1+o_P\left(\m^{-\del}\right)\right\} \cdot \frac{1}{n}\left( \sum_{\{2,\ldots,n\}\cap\iz^\c}h_i \right). 
 \label{eq:122terms}
\end{align}
From the assumption (A2) on the growth rate of maxima and the definition of $\iz$, it follows that $h_{\max} \notin \iz^\c$.  Recall that in our case $h_{\max}=h_1$. As a result, the R.H.S. of \eqref{eq:122terms} can be written as \[ \left\{1+O_P\left(\m^{-\del}\right)\right\} \cdot \frac{1}{n}\left( \sum_{i \in \iz^\c}h_i \right). \] 

But then it follows from Proposition \ref{prop:tail}, part (i), that \[\frac{1}{n}\left( \sum_{i \in \iz^\c}h_i \right)=a - o_P(1),\] whence the R.H.S. of \eqref{eq:122terms}  reduces to $a + o_P(1)$, thereby completing the proof of the proposition.
\end{proof}

We continue our analysis with

\begin{proof}[Proof of Proposition \ref{prop:ip}]
For $i\in \ip \cap \iz$, we have $1+\hat{\la} h_i>1$  since $\hat{\la}>0$ and $h_i>0$. Consequently, we have
\begin{equation} \label{eq:case2}\mid h_i/(1+\hat{\la} h_i)\mid\le h_i \le \m(1+o_P(1)). \end{equation}
As a result, we can write
\begin{align}
 & \left|\frac{1}{n}\sum_{\{2,\ldots,n\}\cap \ip \cap \iz}\frac{h_i}{1+\hat{\la} h_i}\right|  \nonumber \\
\le & \left(\frac{1}{n}\sum_{\{2,\ldots,n\}\cap \ip \cap \iz} h_i \right) \nonumber \\
\le & \m (1+o_P(1)) \cdot \frac{\mid \ip \cap \iz \mid}{n}   \nonumber \\
\le &\m\cdot \frac{\mid \iz \mid}{n} \cdot (1+o_P(1)).   \label{eq:f}
\end{align}
Since $\E[\mid \iz\mid/n] \le  p_{n,\del}$ and $\m p_{n,\del}=o_P(1)$ as per assumption (A4), we have $\m \cdot \frac{\mid \iz \mid}{n} =o_P(1)$ as $n \to \infty$. This completes the proof of the proposition.
\end{proof}

We end this section with

\begin{proof}[Proof of Proposition \ref{prop:ip}]
For $i \in \ic \cap \iz$, we write:
\[
\left|\frac{h_i}{1+\hat{\la} h_i}\right|=\left|\frac{h_i}{1+\left(-\frac{1}{h_{\max}}+\zeta\right)h_i}\right|=\left|\frac{h_i}{\frac{h_{\max}-h_i}{h_{\max}}+ \zeta h_i}\right|.
\]
Now recall that $\zeta\le 0$, $h_{\max}<0$ and $h_i<0$ (the last assertion being true since $i \in \ic$). These, in particular, imply that $\zeta h_i\ge 0$ and $(h_{\max}-h_i)/h_{\max}\ge 0$.

Consequently, we have
\[
\left|\frac{h_i}{\frac{h_{\max}-h_i}{h_{\max}}+\zeta h_i}\right|=\frac{|h_i|}{\frac{h_{\max}-h_i}{h_{\max}}+\zeta h_i}\le\frac{|h_i|h_{\max}}{h_{\max}-h_i}.
\]
Now, by assumption (A2) we have $\mid h_i\mid \le \m(1+o_P(1))$ for all $i$, and the same inequality holds for $h_{\max}$. On the other hand,  by assumption (A3) we have \[\mid h_{\max}-h_i\mid \ge \mid \xi^{-}_{\max}- \xi^{-}_{\max,2}\mid \ge \m^{-\gamma}(1+o_P(1)).\]
This implies that, for $i \in \ic \cap \iz$ we have
\begin{equation} \label{eq:case3}
\left|\frac{h_i}{1+\hat{\la} h_i}\right|  \le \l| \frac{|h_i|h_{\max}}{h_{\max}-h_i}  \r|  \le \m^{\gamma+2}(1+o_P(1)).
\end{equation}
Then we have
\begin{align}
 &\left|\frac{1}{n}\sum_{\{2,\ldots,n\}\cap \ic \cap \iz}\frac{h_i}{1+\hat{\la} h_i}\right| \nonumber \\
\le &\frac{1}{n}\sum_{\{2,\ldots,n\}\cap \ic \cap \iz} \left| \frac{h_i}{1+\hat{\la} h_i}\right| \nonumber \\
\le & \m^{\gamma+2} (1+o_P(1)) \cdot  \frac{\mid \ic \cap \iz \mid}{n}  \nonumber \\
\le &\m^{\gamma+2} \cdot \frac{\mid \iz \mid}{n} \cdot (1+o_P(1)).   \label{eq:f}
\end{align}
Since $\E[\mid \iz\mid/n] \le  p_{n,\del}$ and $\m^{\gamma+2} p_{n,\del}=o_P(1)$ as per assumption (A4), we have $\m^{\gamma+2} \cdot \frac{\mid \iz \mid}{n} =o_P(1)$ as $n \to \infty$. This completes the proof of the proposition.
\end{proof}

\section{Degeneracies in the MLE measure}
In this section, we establish the existence of degeneracies in the MLE measure under mis-specification, and contrast it with the case under correct specification where such degeneracies are not present.
 We work with a fixed $\t \in \Theta$, and abbreviate $a(\t)$ and $\xi_i(\t)$ simply as $a$ and $\xi_i$ respectively.
\begin{proof}[Proof of Theorem \ref{thm : degeneracy}]
We will work in the setting $a=a(\t)>0$; the case $a(\t)<0$ will follow on similar lines.
For each $i=1$, $2$, $\ldots$, $n$, we have
\begin{equation} \label{eq:wtformula}
\hat{w}_i=\frac{1}{n}\frac{1}{1+\hat{\la} h_i}.
\end{equation}
Using this expression, we may deduce that
\begin{align}
\hat{w}_{max}= & \frac{1}{n} \cdot \frac{1}{1+\left(-\frac{1}{h_{\max}}+\zeta\right) h_{\max}} \nonumber \\ = &\frac{1}{n\zeta h_{\max}} \nonumber \\
= &\frac{a}{-h_{\max}}(1+o_P(1)) \text{  [using Theorem \ref{thm : expansion}]} \nonumber \\
 = & \frac{a}{|h_{\max}|}(1+o_P(1)) \nonumber \\ =  & a\cdot \m^{-1}(1+o_P(1)) \nonumber.
\end{align}

Denote by $i_{\max}$ the index $i$ for which $h_i=h_{\max}$. Recall that $\hat{\la}>0$.
For $i\ne i_{max}$ such that $h_i>0$, we have, from \eqref{eq:wtformula}, the inequality $\hat{w}_i<1/n$ .  For $i \ne i_{\max}$ s.t. $h_i<0$, we proceed as
\begin{align}
\hat{w}_i=\frac{1}{n} \cdot \frac{1}{1+\left(-\frac{1}{h_{\max}}+ \zeta \right) h_i}=\frac{1}{n} \cdot \frac{1}{1-\frac{h_i}{h_{\max}}+\zeta h_i}.
\end{align}
Recall from Theorem \ref{thm : expansion} that, since we are in the setting $a>0$, we have $\zeta<0$ as well. Thus,
\[
\hat{w}_i\le\frac{1}{n}\frac{1}{1-\frac{h_i}{h_{\max}}}\le \frac{1}{n} \cdot  \frac{h_{\max}}{h_{\max} - h_i}  \le  \frac{\m^{\gamma+1}(1+o_P(1))}{n},
\]
where, in the last step, we have used assumption (A4).

Combining the above analyses to cover all $i \ne i_{\max}$, we deduce that
\[
\max\{\hat{w}_i~:~i\ne i_{max}\}=O\left(\frac{\m^{\gamma+1}}{n}\right).
\]

On the other hand, we observe that \[|\hl h_i|=|\hl| |h_i| \le \m^{-1}(1+o_P(1))\cdot \m(1+o_p(1))=1+o_P(1),\] where, in the last step, we have used Theorem \ref{thm : degeneracy} and assumption (A2).
Applying this to \eqref{eq:wtformula}, we deduce that \[ \min\{\hat{w}_i~:~1\le i \le n\}\ge \frac{1}{n}(1-o_P(1)), \] as desired.

\end{proof}

\section{Asymptotics of the Wilks' statistic}
In this section, we demonstrate the anomalous behaviour of the Wilks' statistic for empirical likelihood in the misspecified setting, and in doing so, establish Theorem \ref{thm : Wilks} and Corollary \ref{cor:Wilks}.
We will work with a fixed $\t \in \Theta$, and abbreviate $a(\t)$ and $\xi_i(\t)$ simply as $a$ and $\xi_i$ respectively.
\begin{proof}[Proof of Theorem \ref{thm : Wilks}]
As in the proof of Theorem \ref{thm : expansion}, we will work in the setting $a=a(\t)>0$, the case when $a(\t)$ is negative will follow on similar lines.
We begin with
\[ -2\log n \h{w}_i  = 2 \log (1+ \hl h_i). \]
Recall the parameter $\del$ from (A4).
We divide the indices $i$ into two groups : $\I_0$ such that $|h_i|>\m^{1-\del}$ for $i \in \I_0$, and $\I_0^\c$ consisting of the rest of the indices.

This leads to the expression
\begin{equation} \label{eq:Wsplit}
\mathbf{L} = \left(\sum_{i \in \I_0^\c} -2\log n \h{w}_i \right) + \left(\sum_{i \in \I_0} -2\log n \h{w}_i \right)
\end{equation}

For $i \in \I_0^\c$, we deduce from Theorem \ref{thm : expansion} that $|\hl h_i| \le \m^{-\del}(1+o_P(1))$, so for $n$ large enough (such that $\m^{-\del}<1/2$) we can expand
\begin{equation} \label{eq:logexp} 2\log(1+\hl h_i)=2\hl h_i + \Xi_i, \end{equation}
where $ |\Xi_i| \le 4  |\hl|^2 h_i^2$ (with probability tending to 1 as $n \to \infty$). Thus, the contribution to $\mathbf{L}$ due to the indices in $\I_0^\c$ can be written as
\begin{equation} \label{eq:contr-good}
\left(\sum_{i \in \I_0^\c} -2\log n \h{w}_i \right) = 2 \hl \sum_{i \in \I_0^\c} h_i + \Xi,
\end{equation}
where $\Xi=\sum_{i \in \I_0^\c}  \Xi_i$ satisfies
\begin{equation} \label{eq:exp-tail}
|\Xi| \le 4 |\hl|^2 \sum_{i \in I_0^\c} h_i^2 \le 4 |\hl|^2 \sum_{i=1}^n h_i^2 = 4 n |\hl|^2 \frac{1}{n} \sum_{i=1}^n h_i^2 =  \frac{4n}{\m^2} \E[h_1^2] (1+o_P(1)),
\end{equation}
where in the last step we have used Theorem \ref{thm : expansion} and the Law of Large Numbers for $\frac{1}{n}\sum_{i=1}^n h_i^2$.
Also observe, using Proposition \ref{prop:tail}, that \[ 2 \hl \sum_{i \in \I_0^\c} h_i  = 2 \hl (a-o_P(1)) .  \] Combined with Theorem  \ref{thm : expansion}., this yields
\begin{equation} \label{eq:exp-leadorder}
2 \hl \sum_{i \in \I_0^\c} h_i  = 2n \hl \cdot \l(a+o_P(1) \r) = \frac{2n}{\m} (a+o_P(1)),
\end{equation}
 In view of \eqref{eq:contr-good}, this implies
\begin{equation} \label{eq:wgoodterms}
\left(\sum_{i \in \I_0^\c} -2\log n \h{w}_i \right) = \frac{2n}{\m} (a(\t)+o_P(1))
\end{equation}

From Theorem \ref{thm : degeneracy}, in particular \eqref{eq : deg3}, we may deduce that, for all $i$ we have $1 \ge \h{w}_i \ge \frac{1}{n}(1-o_P(1))$.
This implies, in particular, that for any $i$ we have \[ \mid \log n\hat{w}_i \mid \le \log n. \]
In view of this, we can bound the contribution to $\mathbf{L}$ from the indices in $\I_0$ as
\[ |\sum_{i \in \I_0} -2\log n \h{w}_i | \le 2 | \I_0 | \log n.   \]
 Recall that $\E[|\I_0|] = n p_{n,\del}$, so we have
 \begin{align}
 &\E[ |\sum_{i \in \I_0} -2\log n \h{w}_i | ] \le  2 \log n \cdot \E[|\I_0|] \nonumber \\ \le &\quad 2 \log n \cdot n p_{n,\del}
 = \frac{2n}{\m} \cdot \m \log n \cdot p_{n,\del} \nonumber \\ = &\quad \frac{2n}{\m} \cdot o_P(1),
 \end{align}
 where in the last step we have invoked assumption (A4). This, in particular, implies that
 \begin{equation} \label{eq:wbadterms}
\mid \sum_{i \in \I_0} -2\log n \h{w}_i \mid = \frac{2n}{\m} \cdot o_P(1).
 \end{equation}

Combining \eqref{eq:Wsplit}, \eqref{eq:wgoodterms} and \eqref{eq:wbadterms}, we deduce that \[\mathbf{L}=2\cdot \frac{n}{\m} \cdot a(\t)(1+o_P(1)),\] as desired.
\end{proof}

\begin{proof}[Proof of Corollary \ref{cor:Wilks}]
The proof of Corollary \ref{cor:Wilks} follows on similar lines to the proof of Theorem \ref{thm : expansion}, by taking the expansion \eqref{eq:logexp} to degree $k$ (instead of terminating at degree 1).
\end{proof}


\section{The Bayesian setting and posterior consistency}
\label{sec : consistent}
In this section, we discuss the convergence of the (random) posterior distribution from a BayesEL procedure to the delta measure $\delta_{\t_0}$ where $\t_0$ is the \textit{true value} of the parameter $\t$ (i.e., the one from which the data is generated).  This will culminate in the proof of Theorem \ref{thm : consistent}, but first we need to lay down a set-up  and a class of assumptions under which the main result of this section will hold.

We work in the setting of :
\begin{itemize}
\item (B1) There is a compact parameter space $\Theta \subset \R^d$ with open interior, and containing the true parameter value $\t_0$ in the interior.
\item (B2) There is a collection of i.i.d. random variables $X_i(\t)$ (taking values in some space $E$), jointly defined on the parameter space $\Theta$ as random fields. Observe that, in the context of the notations in Section \ref{sec:notpd}, $X_i(\t)$ can in particular be taken to be the tuple $(X_i,\t)$, so that the set up discussed in Section \ref{sec:notpd} are covered in the framework of the present section.
\item (B3) There is a prior distribution on $\Theta$ that has a density $\pi$ (with respect to the Lebesgue measure on $\Theta$) that is positive and lower semi-continuous at $\t_0$ (which implies that $\pi(\t)$ is uniformly bounded away from 0 in a neighbourhood of $\t_0$).
\item (B4) There is a measurable function $h : E \to \R$ such that $h(X_i(\t))$ are the observed variables (with finite second moments are uniformly bounded in $\t \in \Theta$), and the following laws of large numbers hold uniformly for $\pi$-a.e. $\t$ : \[\frac{\sum_{i=1}^n h(X_i(\t))}{n} \to \E[h(X_1(\t))]~~ \mathrm{a.s.}  \] and
\[ \frac{\sum_{i=1}^n h(X_i(\t))^2}{n} \to \E[h(X_1(\t))^2]~~\mathrm{a.s.} .  \]
Define the random variables \[ r_{n,1} = \mathrm{ess sup}_\t \l| \frac{\sum_{i=1}^n h(X_i(\t))}{n} - \E[h(X_1(\t))]   \r|  \] and
\[ r_{n,2} =  \mathrm{ess sup}_\t \l| \frac{\sum_{i=1}^n h(X_i(\t))^2}{n} - \E[h(X_1(\t))^2]   \r| .  \]
Then $\pi$-a.e. uniform convergence implies that $r_{n,1}$ and $r_{n,2} \to 0$ with high probability. In particular, we assume that $\max\{r_{n,1},r_{n,2}\}  = o_P(n^{-\del})$ for some $\del>0$.
\item (B5) The expectation function $\g(\t)=\E[h(X_i(\t))]<\infty$ \textit{strongly identifies} $\t_0$, that is $\g$ satisfies $\g(\t_0)=0$ and $\g(\t) \ne 0$ for $\t \ne \t_0$ in the strong sense - in other words, $\inf_{\t \in B^\c }|\g(\t)| > 0$ for any neighbourhood $B$ of $\t_0$.  Moreover, we assume that $\g$ is 1-Lipschitz at $\t_0$ (i.e. $|\g(\t)| \le c |\t - \t_0|$ on some neighbourhood of $\t_0$). Notice that, this implies $\g$ is bounded on a small enough neighbourhood of $\t_0$.
\item (B6) There exists a deterministic sequence $\M$, possibly depending on $\t$, such that  $\max_{1\le i \le n}|\hi|=\M(1+o_P(1))$. Further, let \newline $M_n=\sup_{\t \in \Theta} \M$ and $m_n=\inf_{\t \in \Theta} \M$. We assume that $m_n \to \infty$ a.s. and $M_n=o_P(n)$ a.s. Furthermore, we assume that $M_n/m_n =o_P(n^{\del/2})$, where $\del$ is as in (B4).
  \item (B7) We assume that, uniformly in $\t\in\Theta$, $\mid\hl(\t)\mid=O_P(1/\M)$ holds.
\end{itemize}

Note that, the strong identifiability of $\g$ in (B5) can be deduced as a consequence of a simpler but weaker condition that $\g$ identifies $\t_0$ (i.e. $\g(\t)=0$ iff $\t=\t_0$) and $\g$ is continuous.  For many models $M_n$ and $m_n$ asymptotically grow at the same rate with $n$.  That is, the assumption (B6) would be easily satisfied.

  Since for each $i=1$, $2$, $\ldots$, $n$, the inequality $(1+\hl(\t) h(X_i(\t)))\ge 0$ holds (see, e.g. \eqref{eq:MLE}), we  have $\mid\hl(\t)\mid\le 1/\M$ with high probability, provided the largest in magnitude among the positive and the negative $h_i$-s both grow at the rate $\M$. This is true under very general conditions, e.g. in the setting of assumption (A2) earlier in this paper.
  This would imply that the assumption (B7) would be satisfied.
  
A general location family of distributions (with $X_i(\t)=X_i+\t$) would satisfy all these assumptions.  With few modifications, the proof below extends to any dimension, provided an appropriate bound for $\|\hl(\t)\|$ along the lines of (B7) can be found. Such bounds can be shown to hold under general conditions already considered in the literature, see e.g. \citep{chaudhuri2018easy}, in particular, conditions (A1)-(A3) therein.

\begin{proof}[Proof of Theorem \ref{thm : consistent}]
We recast the maximum log likelihood $L_n(\t)$ as follows:
\begin{equation} \label{eq : repr} L_n(\t) = \min_\la \{  -\sum_{i=1}^n \log (1+\la h(X_i(\t)) \} \text{ such that } 1 + \la h(X_i(\t)) >0 \forall i=1,\cdots,n.  
 \end{equation}
This representation is related to the fact that the optimal weights for the MLE in empirical likelihood are given by 
\begin{equation} \label{eq:MLEBayes}
\hw_i=\frac{1}{n} \frac{1}{1+\hl h_i},
\end{equation}
see \eqref{eq:MLE};  whereas the maximum log likelihood is simply $\sum_{i=1}^n \log \hw_i$, see \eqref{eq:emplik}. The constraints $1 + \la h(X_i(\t)) >0 \forall i$ are related to the fact that the optimal weights should satisfy $0 \le \hw_i \le 1$, and they are given by \eqref{eq:MLEBayes}. For details, we refer the reader to \citep[Section 3.14]{owenbook}.
The efficacy of this representation will become clear in the argument that follows.
We write the posterior distribution as the (random) measure
  \[ \Pi^{(n)}(\t) = \gamma_n(\t) \pi(\t) \d \t \bigg/ \int_\Theta \gamma_n(\t) \pi(\t) \d \t,  \] where $\gamma_n(\t) = \exp(L_n(\t))$.

Our proof, broadly speaking, will proceed along the following contour. In order to show that $\Pi^{(n)}(\t)  \to \del_{\t_0}$ weakly, it suffices  to show that $\int f(\t) \Pi^{(n)}(\t) \to f(\t_0)$ for all bounded continuous functions $f: \Theta \to  \R$. Equivalently, we will show that for any open ball $B \subset \R^d$ containing $\t_0$, we have $\int_{B^\c} f(\t) \d \Pi^{(n)}(\t) \to 0$.

To accomplish the latter, our argument will consist of two ingredients : an upper bound on   $\gamma_n(\t)$ (roughly giving exponential decay in $n$ outside $B$, upto logarithmic factors), and a lower bound on $ \int_\Theta \gamma_n(\t) \pi(\t) \d \t$ (giving a sub-exponential bound).

\subsection{The upper bound}
Let $B$ be a neighbourhood of $\t_0$. We consider $\t $ such that $\t \in B^\c$.  By strong identifiability it follows that $\g(\t)\ne0$.  Now suppose we consider $\la_1(\t)= \mathrm{Sign}[\g(\t)]/(100\cdot\M)$, where the choice of $100$ is totally arbitrary.  It follows from the definition of $\M$ that $|\la_1(\t) \cdot h(X_i(\t))| \le 1/100$ for all $i$, so in particular $\la_1(\t)$ is a candidate for the minimization problem \eqref{eq : repr}. As such, we obtain
\[ L(\t) \le  -\sum_{i=1}^n \log (1+\la_1(\t) h(X_i(\t)).   \]
Now, $|\la_1(\t) h(X_i(\t))|\le 1/100$ implies that, via an expansion of the logarithmic series, we have \[- \log (1+\la_1(\t) h(X_i(\t))) \le  - \la_1(\t) h(X_i(\t)) +   (\la_1(\t) h(X_i(\t)))^2. \]

Now from the definition of $L_n(\t)$ we can proceed as:
\begin{align}
\frac{1}{n}L_n(\t)&\le - \la_1(\t) \cdot \frac{1}{n} \l( \sum_{i=1}^n \hi  \r) + \la_1(\t)^2 \cdot \frac{1}{n} \l( \sum_{i=1}^n \hi^2  \r) \nonumber \\
& \le - \la_1(\t) \cdot \E[h(X_1(\t))] +  \la_1(\t) r_{n,1} +\la_1(\t)^2 \l( \E[h(X_1(\t))^2] + r_{n,2}  \r) \nonumber \\
& = - \la_1(\t) \cdot \g(\t)+  \la_1(\t) r_{n,1}  + \la_1(\t)^2\l( \E[h(X_1(\t))^2] + r_{n,2}  \r) \label{eq:B-ubound-expansion}
\end{align}
Now notice that, since $\M$ diverges as $n\rightarrow\infty$, $\la_1(\t) =O(1/\M) \rightarrow 0$.  Moreover,  by strong identifiability of $\g$ (see assumption (B5)),  $\g(\t)$ is uniformly bounded away from $0$ on $B^\c$.  Furthermore the $\pi$-a.e. uniform convergence assumption (B4) implies that $r_{n,1}$ and $r_{n,2} \to 0$ with high probability. Finally, by assumption (B2), the second moment $\E[h(X_1(\t))^2]$ is uniformly bounded in $\t \in B^\c$.  Combining these observations, we deduce that the $- \la_1(\t) \cdot \g(\t)$ term dominates in the upper bound in \eqref{eq:B-ubound-expansion}. Consequently, we have
\begin{align*}
 \frac{1}{n}L_n(\t)&\le - \la_1(\t) \cdot \g(\t) \l( 1+ o_P(1) \r) \text{   \hfill[using strong identifiability of $\g$]}\\
& = - |\g(\t)|/(100\cdot\M) \cdot \l(1+ o_P(1) \r) \text{   \hfill[substituting $\la_1(\t)$]} \numberthis \label{eq:B-ubound-1}
\end{align*}

Putting together all of the above, we deduce that on $B^\c$ we have
\begin{equation} \label{eq : ubound}
\gamma_n(\t) \le \exp\l( - \frac{n}{100\cdot\M} |\g(\t)|(1+o_P(1)) \r),
\end{equation}
completing the proof of the upper bound.

\subsection{The lower bound}
In this section, we show that, with high probability, $\int_\Theta \gamma_n(\t) \pi(\t) \d(\t) \ge \exp(-an^{1-\del/2}/m_n(1+o_P(1)))$ for some absolute constant $a>0$ and $\del$ as in (B4). We will work with $\t$ in a small enough neighbourhood $U$ of $\t_0$ such that $\g$ is bounded on that neighbourhood, further specifications on the precise choice of $U$ will be outlined later.

We start by observing that, by assumption (B7), $|\hl(\t)| =O(1/\M)$ and $(\g(\t) + r_{n,1})$ is $O_P(1)$ on $U$. Now observe that, for $\t$ in this neighbourhood $U$ we have, using Jensen's inequality,
\begin{equation} \label{eq:B-lbound-first}
  \frac{1}{n}L_n(\t)=- \frac{1}{n} \sum_{i=1}^n \log(1+\hl(\t) \hi) \ge - \log \l( 1 + \hl(\t) \cdot \frac{\sum_{i=1}^n \hi}{n} \r).
  \end{equation}
Since $\l| \frac{1}{n}\sum_{i=1}^n \hi \r| \le |\g(\t)| + r_{n,1}$, and since $-\log$ is a monotonically decreasing function, we can further lower bound the right hand side of \eqref{eq:B-lbound-first} as
  \begin{align*} 
 - \log \l( 1 + \hl(\t) \cdot \frac{\sum_{i=1}^n \hi}{n} \r) \ge &- \log \l( 1 + |\hl(\t)| \cdot \l| \frac{\sum_{i=1}^n \hi}{n}  \r| \r) \\ 
 \ge &- \log \l( 1 + |\hl(\t) | (|\g(\t)| + r_{n,1}) \r). \numberthis \label{eq:B-lbound-second}
\end{align*}
 Notice that $\mid\hl(\t)\mid = O_P(1/m_n)$ - consequence of assumption (B7) and the definition of $m_n$. Furthermore, with high probability, $(\g(\t) + r_{n,1})$ is uniformly bounded in $\t \in U$. As a result, $|\hl(\t)|(|\g(\t)|+r_{n,1}) =o_P(1)$. It mat be noted that the analysis in this sub-section until this point remains valid as long as the function $\g$ is known to be bounded on the set $\Theta$.
 
Via the lower bound $-\log (1+x) \ge -2x$ for small enough $x \ge 0$, together with \eqref{eq:B-lbound-second}, these imply that
\begin{align*}
\frac{1}{n}L_n(\t)& \ge -2 \mid\hl(\t)\mid ( \mid\g(\t)\mid +  r_{n,1})\\
& \ge -2c\mid\hl(\t)\mid |\t - \t_0| - 2 \mid\hl(\t)\mid \mid r_{n,1}\mid \numberthis \label{eq:B-lbound-third}
\end{align*}
where, in the last step, we have used the fact that $\g$ is 1-Lipschitz at $\t_0$ (assumption (B5), and $\g(\t_0)=0$) . 

 On the other hand, $|\hl(\t)|=O_P(1/\M)=o_P(n^{\del/2}/m_n)$, where $\del$ is as in assumption (B4) 
\begin{remark} \label{rem:lbound} 
 Note that we could have also taken, e.g.,  $\log \log n$ (or indeed, any sequence going to $\infty$) instead of $n^{\del/2}$ in the last $o_P$ bound; we work with $n^{\del/2}$ purely for the sake of notational simplicity. 
\end{remark} 
This, together with \eqref{eq:B-lbound-third}, implies that for $\t \in U$, we have with high probability
\[\gamma_n(\t) = \exp(n \cdot \frac{1}{n}L_n(\t)) \ge \exp(-2n^{1+\del/2} r_{n,1}/m_n) \exp(-2cn^{1+\del/2} |\t - \t_0|/m_n) \pi(\t)\] and therefore
\[\int_U\gamma_n(\t) \pi(t) \d t \ge \exp(-2n^{1+\del/2} r_{n,1}/m_n)\int_U \exp(-2cn^{1+\del/2} |\t - \t_0|/m_n) \pi(\t)\d \t.\]
Choose $U$ to be the set of all $\t$ such that $2cn^{1+\del/2}|\t-\t_0|/m_n \le 1$ and that $\pi(\t)>b$ for some $b>0$ for all $\t \in U$ (the  last condition being guaranteed by assumption (B3)). Using the fact that $r_{n,1}=o_P(n^{-\delta})$ (assumption (B4)), this implies that with high probability we have
$\gamma_n(\t)\ge \exp(-2n^{1-\del/2}/m_n)\cdot e^{-1}\cdot\pi(\t)$ on $U$. Also, recall that $U \subset \Theta \subset \R^d$. Recall that, by assumption (B6) we have $m_n \le M_n = o_P(n)$, which in particular implies that Vol($U$) is decaying polynomially in $n$.  Then we have, with high probability
\begin{align*}&  \int_\Theta \gamma_n(\t) \pi(\t) \d \t \ge  \int_U\gamma(\t) \pi(\t) \d \t  \ge \exp(-2n^{1-\del/2}/m_n)\cdot e^{-1}\cdot \mathrm{Vol}(U) \cdot b \\ &=  \exp(-2n^{1-\del/2}/m_n)\cdot e^{-1}\cdot (m_n/2cn^{1+\del/2})^d  \cdot b \ge \exp(-a n^{1-\del/2}/m_n) \end{align*} for some absolute constant $a>0$, as desired. 

\subsection{Combining the upper and the lower bounds}
We now combine the upper and the lower bounds obtained in the previous two sections.

Suppose $B$ is a neighbourhood of $\t_0$. Then, with high probability, we have
\begin{align*} &\int_{B^\c}\Pi^{(n)}(\t) \d \t = \l( \int_{B^\c} \gamma_n(\t) \pi(\t) \d \t \r) \bigg/ \l(\int_{\Theta} \gamma_n(\t) \pi(\t) \d \t \r) \\ \le & \int_{B^\c} \exp\l( - \frac{n}{100\cdot M_n} |\g(\t)|(1+o_P(1)) \r) \pi(\t) \d \t \bigg/  \exp(-a n^{1-\del/2}/m_n),
\end{align*}
with the $o_P(1)$ term being uniform in $\t \in B^\c$.
\textcolor{black}{By assumption (B5), there exists $\ell(B)>0$ such that $\mid\g(\t)\mid\ge 100\cdot\ell(B) \forall \t \in B^\c$. It follows that with high probability we have}
\begin{align*}
  \int_{B^\c}\Pi^{(n)}(\t) \d \t&\le \exp\l(\frac{a n^{1-\del/2}}{m_n}\r) \exp\l( - \frac{n}{100 M_n} \ell(B)(1+o_P(1)) \r)\\
  &=\exp\l(\frac{a n^{1-\del/2}}{m_n}-\frac{n}{M_n} \ell(B) (1+o_P(1))\r).  \numberthis \label{eq:rate1}  \end{align*}
Therefore, for any bounded continuous function $f:\Theta \to \R$, we have with high probability,
\begin{align*}
&\l| \int_\Theta f(\t)~\Pi^{(n)}(\t)~\d \t -f(\t_0)\r| = \l| \int_\Theta f(\t)~\Pi^{(n)}(\t)~\d \t - \int_\Theta f(\t_0)~\Pi^{(n)}(\t)~\d \t \r| \\
\le &\int_\Theta |f(\t) -f(\t_0) | ~\Pi^{(n)}(\t)~\d \t\\   \le&  \int_B |f(\t) -f(\t_0)| ~\Pi^{(n)}(\t)~\d \t + \int_{B^\c} |f(\t)-f(\t_0)| ~\Pi^{(n)}(\t)~\d \t.
\end{align*}
For any given $\eps>0$, by continuity of $f$ we can choose $B=B_\eps$ such that $|f(\t)-f(\t_0)| \le \eps$ for all $\t \in B_\eps$. Then
\[ \l| \int_\Theta f(\t) \Pi^{(n)} (\t) \d \t -f(\t_0)\r| \le \eps +  \exp\l(\frac{an^{1-\del/2}}{m_n}-\frac{n}{M_n} \ell(B) (1+o_P(1))\r). \]
Using $(B6)$, we see that, with high probability we have
\begin{equation} \label{eq:rate2}
\exp\l(\frac{an^{1-\del/2}}{m_n}-\frac{n}{M_n} \ell(B) (1+o_P(1))\r) = \exp\l( -\frac{n}{M_n} \ell(B)(1+o_P(1)) \r).
\end{equation}

Holding $\eps$ fixed and letting $n \to \infty$, we deduce that, with high probability  \[ \l| \int_\Theta f(\t) ~\Pi^{(n)}(\t)~\d \t -f(\t_0)\r|\le \eps  + o_P(1).\] But this implies that $\int_\Theta f(\t) \Pi^{(n)}(\t)  \d  (\t) \to f(\t_0)$ in probability as $n \to \infty$. 

Since this is true for any bounded continuous function $f$, we deduce that $\Pi^{(n)}(\t) \to \del_{\t_0}$ in probability, as $n \to \infty$. It may be noted that, combining \eqref{eq:rate1} and \eqref{eq:rate2}, we obtain a decay rate of $\exp\l( -\frac{n}{M_n} \ell(B) \r)$ for $\Pi^{(n)}(B^\c)$. 
\end{proof}

\begin{remark} \label{rem:Bayesian-1}
It may be observed that for a location family, the quantity $\ell(B)$ can be taken to be a measure of the ``deviation from the truth'', that is, the radius of the ball $B$. Accordingly, if $\t_0 \in B$, then the posterior measure of $B^\c$ converges to 0  at a rate $\le \exp(-\frac{n}{M_n} \cdot \mathrm{Radius}(B))=\exp(-\frac{n}{M_n} \cdot \text{Deviation from truth})$.
\end{remark}

\begin{remark} \label{rem:Bayesian-2}
When the function $\g$ is bounded on $\Theta$, \eqref{eq:B-ubound-1} and \eqref{eq:B-lbound-third}, together with the assumptions (B6) that $|\hl(\t)|=O_P(1/\M)$, imply that, to the leading order, roughly speaking we have with high probability
 \begin{equation} \label{eq:rem-B-2}
                     -C_1 |\g(\t)| \log \log n/m_n  \le \frac{1}{n}L_n(\t) \le  - C_2 |\g(\t)|/M_n,  
 \end{equation}
 for some positive constants $C_1$ and $C_2$. It may be noted that, for many natural models, like a location family, $m_n$ and $M_n$ are of the same order as $n \to \infty$. In such cases, \eqref{eq:rem-B-2} provides comparable upper and lower bounds for the log-likelihood $L_n(\t)$, upto $\log \log$ factors.
\end{remark}

\section*{Acknowledgements}
S.G. would like to thank Bikramjit Das for pointing to pertinent references on extreme values, and Philippe Rigollet for helpful discussions. S.C. would like to thank Mark Handcock for helpful comments on ERGM models.
S.G. was supported in part by the MOE grant R-146-000-250-133. S.C. was supported in part by the MOE grant R-155-000-194-114.

\bibliography{ElDeg_article}

\begin{thebibliography}{49}
\providecommand{\natexlab}[1]{#1}
\providecommand{\url}[1]{\texttt{#1}}
\expandafter\ifx\csname urlstyle\endcsname\relax
  \providecommand{\doi}[1]{doi: #1}\else
  \providecommand{\doi}{doi: \begingroup \urlstyle{rm}\Url}\fi

\bibitem[Auffinger et~al.(2013{\natexlab{a}})Auffinger, Arous, and
  {\v{C}}ern{\`y}]{auffinger2013random}
A.~Auffinger, G.~B. Arous, and J.~{\v{C}}ern{\`y}.
\newblock Random matrices and complexity of spin glasses.
\newblock \emph{Communications on Pure and Applied Mathematics}, 66\penalty0
  (2):\penalty0 165--201, 2013{\natexlab{a}}.

\bibitem[Auffinger et~al.(2013{\natexlab{b}})Auffinger, Arous,
  et~al.]{auffinger2013complexity}
A.~Auffinger, G.~B. Arous, et~al.
\newblock Complexity of random smooth functions on the high-dimensional sphere.
\newblock \emph{The Annals of Probability}, 41\penalty0 (6):\penalty0
  4214--4247, 2013{\natexlab{b}}.

\bibitem[Chang et~al.(2018)Chang, Tang, and Wu]{chang_tang_wu_2018}
J.~Chang, C.~Y. Tang, and T.~T. Wu.
\newblock A new scope of penalized empirical likelihood with high-dimensional
  estimating equations.
\newblock \emph{Ann. Statist.}, 46\penalty0 (6B):\penalty0 3185--3216, 12 2018.
\newblock \doi{10.1214/17-AOS1655}.
\newblock URL \url{https://doi.org/10.1214/17-AOS1655}.

\bibitem[Chatterjee and Diaconis(2013)]{chatterjee_diaconis_2013}
S.~Chatterjee and P.~Diaconis.
\newblock Estimating and understanding exponential random graph models.
\newblock \emph{Ann. Statist.}, 41\penalty0 (5):\penalty0 2428--2461, 10 2013.
\newblock \doi{10.1214/13-AOS1155}.
\newblock URL \url{https://doi.org/10.1214/13-AOS1155}.

\bibitem[Chaudhuri and Ghosh(2011)]{chaudhuri_ghosh_2011}
S.~Chaudhuri and M.~Ghosh.
\newblock Empirical likelihood for small area estimation.
\newblock \emph{Biometrika}, 98:\penalty0 473--480, 2011.

\bibitem[Chaudhuri et~al.(2017)Chaudhuri, Mondal, and
  Yin]{chaudhuri_mondal_yin_2017}
S.~Chaudhuri, D.~Mondal, and T.~Yin.
\newblock {Hamiltonian Monte Carlo} sampling in {Bayesian} empirical likelihood
  computation.
\newblock \emph{Journal of the Royal Statistical Society: Series B (Statistical
  Methodology)}, 79\penalty0 (1):\penalty0 293--320, 2017.

\bibitem[Chaudhuri et~al.(2018)Chaudhuri, Ghosh, Nott, and
  Pham]{chaudhuri2018easy}
S.~Chaudhuri, S.~Ghosh, D.~J. Nott, and K.~C. Pham.
\newblock An easy-to-use empirical likelihood abc method.
\newblock \emph{arXiv preprint arXiv:1810.01675}, 2018.

\bibitem[Chen and Van~Keilegom(2009)]{chen2009review}
S.~X. Chen and I.~Van~Keilegom.
\newblock A review on empirical likelihood methods for regression.
\newblock \emph{Test}, 18\penalty0 (3):\penalty0 415--447, 2009.

\bibitem[Chib et~al.(2018)Chib, Shin, and Simoni]{chib_shin_simoni_2018}
S.~Chib, M.~Shin, and A.~Simoni.
\newblock Bayesian estimation and comparison of moment condition models.
\newblock \emph{Journal of the American Statistical Association}, 113\penalty0
  (524):\penalty0 1656--1668, 2018.
\newblock \doi{10.1080/01621459.2017.1358172}.
\newblock URL \url{https://doi.org/10.1080/01621459.2017.1358172}.

\bibitem[Cressie and Read(1984)]{cressie_read_1984}
N.~Cressie and T.~R.~C. Read.
\newblock Multinomial goodness-of-fit tests.
\newblock \emph{Journal of the Royal Statistical Society. Series B
  (Methodological)}, 46\penalty0 (3):\penalty0 440--464, 1984.
\newblock ISSN 00359246.
\newblock URL \url{http://www.jstor.org/stable/2345686}.

\bibitem[Deheuvels(1986)]{deheuvels_1986}
P.~Deheuvels.
\newblock On the influence of the extremes of an i.i.d. sequence on the maximal
  spacings.
\newblock \emph{Ann. Probab.}, 14\penalty0 (1):\penalty0 194--208, 01 1986.
\newblock \doi{10.1214/aop/1176992622}.
\newblock URL \url{https://doi.org/10.1214/aop/1176992622}.

\bibitem[DiCiccio et~al.(1991)DiCiccio, Hall, and
  Romano]{diciccio_hall_romano_1991}
T.~DiCiccio, P.~Hall, and J.~Romano.
\newblock Empirical likelihood is bartlett-correctable.
\newblock \emph{Ann. Statist.}, 19\penalty0 (2):\penalty0 1053--1061, 06 1991.
\newblock \doi{10.1214/aos/1176348137}.
\newblock URL \url{https://doi.org/10.1214/aos/1176348137}.

\bibitem[Fang and Mukerjee(2005)]{fang_mukerjee_2005}
K.-T. Fang and R.~Mukerjee.
\newblock Expected lengths of confidence intervals based on empirical
  discrepancy statistics.
\newblock \emph{Biometrika}, 92:\penalty0 499--503, 2005.

\bibitem[Fang and Mukerjee(2006)]{fang_mukerjee_2006}
K.-T. Fang and R.~Mukerjee.
\newblock Empirical type likelihood allowing posterior credible stes with
  frequentist validity: Higher-order asymptotics.
\newblock \emph{Biometrika}, 93:\penalty0 723--733, 2006.

\bibitem[Fellows and Handcock(2017)]{fellows_handcock_2017}
I.~Fellows and M.~Handcock.
\newblock {Removing Phase Transitions from Gibbs Measures}.
\newblock In A.~Singh and J.~Zhu, editors, \emph{Proceedings of the 20th
  International Conference on Artificial Intelligence and Statistics},
  volume~54 of \emph{Proceedings of Machine Learning Research}, pages 289--297,
  Fort Lauderdale, FL, USA, 20--22 Apr 2017. PMLR.
\newblock URL \url{http://proceedings.mlr.press/v54/fellows17a.html}.

\bibitem[Grend\'{a}r and Judge(2009)]{grendar_judge_2009}
M.~Grend\'{a}r and G.~Judge.
\newblock Asymptotic equivalence of empirical likelihood and bayesian map.
\newblock \emph{Ann. Statist.}, 37\penalty0 (5A):\penalty0 2445--2457, 10 2009.
\newblock \doi{10.1214/08-AOS645}.
\newblock URL \url{https://doi.org/10.1214/08-AOS645}.

\bibitem[Hall and Owen(1993)]{hall_owen_1993}
P.~Hall and A.~B. Owen.
\newblock Empirical likelihood confidence bands in density estimation.
\newblock \emph{Journal of Computational and Graphical Statistics}, 2\penalty0
  (3):\penalty0 273--289, 1993.
\newblock \doi{10.1080/10618600.1993.10474612}.
\newblock URL
  \url{https://amstat.tandfonline.com/doi/abs/10.1080/10618600.1993.10474612}.

\bibitem[Hjort et~al.(2009)Hjort, McKeague, and
  Van~Keilegom]{hjort_mckeague_keilegom_2009}
N.~L. Hjort, I.~W. McKeague, and I.~Van~Keilegom.
\newblock Extending the scope of empirical likelihood.
\newblock \emph{Ann. Statist.}, 37\penalty0 (3):\penalty0 1079--1111, 06 2009.
\newblock \doi{10.1214/07-AOS555}.
\newblock URL \url{https://doi.org/10.1214/07-AOS555}.

\bibitem[Horv\'at et~al.(2015)Horv\'at, Czabarka, and
  Toroczkai]{horvat_czabarke_toroczkai_2015}
S.~Horv\'at, E.~Czabarka, and Z.~Toroczkai.
\newblock Reducing degeneracy in maximum entropy models of networks.
\newblock \emph{Phys. Rev. Lett.}, 114:\penalty0 158701, Apr 2015.
\newblock \doi{10.1103/PhysRevLett.114.158701}.
\newblock URL \url{https://link.aps.org/doi/10.1103/PhysRevLett.114.158701}.

\bibitem[Jaynes(1957{\natexlab{a}})]{jaynes_1957a}
E.~T. Jaynes.
\newblock Information theory and statistical mechanics.
\newblock \emph{Phys. Rev.}, 106:\penalty0 620--630, May 1957{\natexlab{a}}.
\newblock \doi{10.1103/PhysRev.106.620}.
\newblock URL \url{https://link.aps.org/doi/10.1103/PhysRev.106.620}.

\bibitem[Jaynes(1957{\natexlab{b}})]{jaynes_1957b}
E.~T. Jaynes.
\newblock Information theory and statistical mechanics. ii.
\newblock \emph{Phys. Rev.}, 108:\penalty0 171--190, Oct 1957{\natexlab{b}}.
\newblock \doi{10.1103/PhysRev.108.171}.
\newblock URL \url{https://link.aps.org/doi/10.1103/PhysRev.108.171}.

\bibitem[Kitamura(2001)]{kitamura_2001}
Y.~Kitamura.
\newblock Asymptotic optimality of empirical likelihood for testing moment
  restrictions.
\newblock \emph{Econometrica}, 69\penalty0 (6):\penalty0 1661--1672, 2001.
\newblock \doi{10.1111/1468-0262.00261}.
\newblock URL
  \url{https://onlinelibrary.wiley.com/doi/abs/10.1111/1468-0262.00261}.

\bibitem[Kitamura et~al.(2012)Kitamura, Santos, and
  Shaikh]{kitamura_santos_shaikh_2012}
Y.~Kitamura, A.~Santos, and A.~M. Shaikh.
\newblock On the asymptotic optimality of empirical likelihood for testing
  moment restrictions.
\newblock \emph{Econometrica}, 80\penalty0 (1):\penalty0 413--423, 2012.
\newblock \doi{10.3982/ECTA8773}.
\newblock URL \url{https://onlinelibrary.wiley.com/doi/abs/10.3982/ECTA8773}.

\bibitem[Kotz et~al.(2012)Kotz, Kozubowski, and Podgorski]{kotz2012laplace}
S.~Kotz, T.~Kozubowski, and K.~Podgorski.
\newblock \emph{The Laplace distribution and generalizations: a revisit with
  applications to communications, economics, engineering, and finance}.
\newblock Springer Science \& Business Media, 2012.

\bibitem[Lazar and Mykland(1998)]{lazar_mykland_1998}
N.~Lazar and P.~A. Mykland.
\newblock {An evaluation of the power and conditionality properties of
  empirical likelihood}.
\newblock \emph{Biometrika}, 85\penalty0 (3):\penalty0 523--534, 09 1998.
\newblock ISSN 0006-3444.
\newblock \doi{10.1093/biomet/85.3.523}.
\newblock URL \url{https://doi.org/10.1093/biomet/85.3.523}.

\bibitem[Lazar(2003)]{lazar_2003}
N.~A. Lazar.
\newblock {Bayesian empirical likelihood}.
\newblock \emph{Biometrika}, 90\penalty0 (2):\penalty0 319--326, 06 2003.
\newblock ISSN 0006-3444.
\newblock \doi{10.1093/biomet/90.2.319}.
\newblock URL \url{https://doi.org/10.1093/biomet/90.2.319}.

\bibitem[Li(1995)]{li_1995}
G.~Li.
\newblock Nonparametric likelihood ratio estimation of probabilities for
  truncated data.
\newblock \emph{Journal of the American Statistical Association}, 90\penalty0
  (431):\penalty0 997--1003, 1995.
\newblock \doi{10.1080/01621459.1995.10476601}.
\newblock URL
  \url{https://www.tandfonline.com/doi/abs/10.1080/01621459.1995.10476601}.

\bibitem[Mengersen et~al.(2012)Mengersen, Pudlo, and
  Robert]{mengersen_pudlo_robert_2012}
K.~L. Mengersen, P.~Pudlo, and C.~P. Robert.
\newblock Bayesian computation via empirical likelihood.
\newblock \emph{arXiv:1205.5658}, 2012.

\bibitem[Murphy(1995)]{murphy_1995}
S.~A. Murphy.
\newblock Likelihood ratio-based confidence intervals in survival analysis.
\newblock \emph{Journal of the American Statistical Association}, 90\penalty0
  (432):\penalty0 1399--1405, 1995.
\newblock \doi{10.1080/01621459.1995.10476645}.
\newblock URL
  \url{https://www.tandfonline.com/doi/abs/10.1080/01621459.1995.10476645}.

\bibitem[Murphy and van~der Vaart(1997)]{murphy_vandervaart_1997}
S.~A. Murphy and A.~W. van~der Vaart.
\newblock Semiparametric likelihood ratio inference.
\newblock \emph{Ann. Statist.}, 25\penalty0 (4):\penalty0 1471--1509, 08 1997.
\newblock \doi{10.1214/aos/1031594729}.
\newblock URL \url{https://doi.org/10.1214/aos/1031594729}.

\bibitem[Mykland(1999)]{mykland_1999}
P.~A. Mykland.
\newblock Bartlett identities and large deviations in likelihood theory.
\newblock \emph{Ann. Statist.}, 27\penalty0 (3):\penalty0 1105--1117, 06 1999.
\newblock \doi{10.1214/aos/1018031270}.
\newblock URL \url{https://doi.org/10.1214/aos/1018031270}.

\bibitem[Nagaraja(1982)]{nagaraja_1982}
H.~N. Nagaraja.
\newblock Record values and extreme value distributions.
\newblock \emph{Journal of Applied Probability}, 19\penalty0 (1):\penalty0
  233--239, 1982.
\newblock ISSN 00219002.
\newblock URL \url{http://www.jstor.org/stable/3213934}.

\bibitem[Nagaraja et~al.(2015)Nagaraja, Bharath, and
  Zhang]{nagaraja2015spacings}
H.~N. Nagaraja, K.~Bharath, and F.~Zhang.
\newblock Spacings around an order statistic.
\newblock \emph{Annals of the Institute of Statistical Mathematics},
  67\penalty0 (3):\penalty0 515--540, 2015.

\bibitem[Owen(2001)]{owenbook}
A.~Owen.
\newblock \emph{Empirical Likelihood}.
\newblock Chapman\& Hall/CRC, 2001.

\bibitem[Owen(1988)]{owen_1988}
A.~B. Owen.
\newblock {Empirical likelihood ratio confidence intervals for a single
  functional}.
\newblock \emph{Biometrika}, 75\penalty0 (2):\penalty0 237--249, 06 1988.
\newblock ISSN 0006-3444.
\newblock \doi{10.1093/biomet/75.2.237}.
\newblock URL \url{https://doi.org/10.1093/biomet/75.2.237}.

\bibitem[Peng and Schick(2013)]{peng_schick_2013}
H.~Peng and A.~Schick.
\newblock Empirical likelihood approach to goodness of fit testing.
\newblock \emph{Bernoulli}, 19\penalty0 (3):\penalty0 954--981, 08 2013.
\newblock \doi{10.3150/12-BEJ440}.
\newblock URL \url{https://doi.org/10.3150/12-BEJ440}.

\bibitem[Porter et~al.(2015)Porter, Holan, and Wikle]{porter_holan_wikle_2015}
A.~T. Porter, S.~H. Holan, and C.~K. Wikle.
\newblock Bayesian semiparametric hierarchical empirical likelihood spatial
  models.
\newblock \emph{journal of Statistical Planning and Inference}, 165:\penalty0
  78--90, 2015.

\bibitem[Qin and Lawless(1994)]{qin_lawless_1994}
J.~Qin and J.~Lawless.
\newblock Empirical likelihood and general estimating equations.
\newblock \emph{Ann. Statist.}, 22\penalty0 (1):\penalty0 300--325, 03 1994.
\newblock \doi{10.1214/aos/1176325370}.
\newblock URL \url{https://doi.org/10.1214/aos/1176325370}.

\bibitem[Reiss(2012)]{reiss2012approximate}
R.-D. Reiss.
\newblock \emph{Approximate distributions of order statistics: with
  applications to nonparametric statistics}.
\newblock Springer science \& business media, 2012.

\bibitem[Resnick(2013)]{resnick2013extreme}
S.~I. Resnick.
\newblock \emph{Extreme values, regular variation and point processes}.
\newblock Springer, 2013.

\bibitem[Robins et~al.(2007)Robins, Pattison, Kalish, and
  Lusher]{robins_pattison_kalish_lusher_2007}
G.~Robins, P.~Pattison, Y.~Kalish, and D.~Lusher.
\newblock An introduction to exponential random graph (p*) models for social
  networks.
\newblock \emph{Social Networks}, 29\penalty0 (2):\penalty0 173 -- 191, 2007.
\newblock ISSN 0378-8733.
\newblock \doi{https://doi.org/10.1016/j.socnet.2006.08.002}.
\newblock URL
  \url{http://www.sciencedirect.com/science/article/pii/S0378873306000372}.
\newblock Special Section: Advances in Exponential Random Graph (p*) Models.

\bibitem[Schennach(2005)]{schennach_2005}
S.~Schennach.
\newblock Bayesian exponentially tilted empirical likelihood.
\newblock \emph{Biometrika}, 92:\penalty0 31--46, 2005.

\bibitem[Snijders et~al.(2006)Snijders, Pattison, Robins, and
  Handcock]{snijders_pattison_robins_handcock_2006}
T.~A.~B. Snijders, P.~E. Pattison, G.~L. Robins, and M.~S. Handcock.
\newblock New specifications for exponential random graph models.
\newblock \emph{Sociological Methodology}, 36\penalty0 (1):\penalty0 99--153,
  2006.
\newblock \doi{10.1111/j.1467-9531.2006.00176.x}.
\newblock URL
  \url{https://onlinelibrary.wiley.com/doi/abs/10.1111/j.1467-9531.2006.00176.x}.

\bibitem[Vexler et~al.(2019)Vexler, Zou, and Hutson]{vexler_zou_hutson_2018}
A.~Vexler, L.~Zou, and A.~D. Hutson.
\newblock The empirical likelihood prior applied to bias reduction of general
  estimating equations.
\newblock \emph{Computational Statistics \& Data Analysis}, 138:\penalty0 96 --
  106, 2019.
\newblock ISSN 0167-9473.
\newblock \doi{https://doi.org/10.1016/j.csda.2019.04.001}.
\newblock URL
  \url{http://www.sciencedirect.com/science/article/pii/S0167947319300854}.

\bibitem[Welsch(1973)]{welsch_1973}
R.~E. Welsch.
\newblock A convergence theorem for extreme values from gaussian sequences.
\newblock \emph{Ann. Probab.}, 1\penalty0 (3):\penalty0 398--404, 06 1973.
\newblock \doi{10.1214/aop/1176996934}.
\newblock URL \url{https://doi.org/10.1214/aop/1176996934}.

\bibitem[Yang and He(2012)]{yang_he_2012}
Y.~Yang and X.~He.
\newblock Bayesian empirical likelihood for quantile regression.
\newblock \emph{Annals of Statistics}, 40\penalty0 (2):\penalty0 1102--1131,
  2012.

\bibitem[Yuan and Clarke(2010)]{yuan_clarke_2010}
A.~Yuan and B.~Clarke.
\newblock Reference priors for empirical likelihoods.
\newblock In M.~Chen, P.~M{\"u}ller, D.~Sun, K.~Ye, and D.~Dey, editors,
  \emph{Frontiers of Statistical Decision Making and Bayesian Analysis: In
  Honor of James O. Berger}, pages 56--68. Springer New York, 2010.
\newblock ISBN 9781441969446.
\newblock URL \url{https://books.google.com.sg/books?id=uOwFwrD8wDoC}.

\bibitem[Yuan et~al.(2014)Yuan, Xu, and Zheng]{yuan_xu_zheng_2014}
A.~Yuan, J.~Xu, and G.~Zheng.
\newblock On empirical likelihood statistical functions.
\newblock \emph{Journal of Econometrics}, 178:\penalty0 613--623, 2014.

\bibitem[Zhong and Ghosh(2016)]{zhong_ghosh_2016}
X.~Zhong and M.~Ghosh.
\newblock Higher-order properties of bayesian empirical likelihood.
\newblock \emph{Electron. J. Statist.}, 10\penalty0 (2):\penalty0 3011--3044,
  2016.
\newblock \doi{10.1214/16-EJS1201}.
\newblock URL \url{https://doi.org/10.1214/16-EJS1201}.

\end{thebibliography}

%

%


\end{document}